\documentclass[a4paper,11pt]{amsart}

\usepackage{amsmath}
\usepackage{amssymb}
\usepackage{amsthm}
\usepackage[all]{xy}
\usepackage{enumerate}

\input young_diag

\newtheorem{thm}{Theorem}[section]
\newtheorem{lemma}[thm]{Lemma}

\newtheorem{ex}[thm]{Example}
\newtheorem{prop}[thm]{Proposition}
\numberwithin{equation}{section}

\newcommand{\xgr}{\mathfrak{gr}}
\newcommand{\lag}{\mathfrak g}
\newcommand{\lap}{\mathfrak p}

\newcommand{\xJ}{\mathfrak J}
\newcommand{\xS}{\mathfrak S}

\newcommand{\laso}{\mathfrak{so}}

\newcommand{\lagl}{\mathfrak{gl}}

\newcommand{\xj}{\mathfrak{j}}

\newcommand{\mV}{\mathbb V}

\newcommand{\mW}{\mathbb W}
\newcommand{\mE}{\mathbb E}
\newcommand{\mF}{\mathbb F}

\newcommand{\gP}{\mathrm P}

\newcommand{\GL}{\mathrm{GL}}

\newcommand{\Spin}{\mathrm{Spin}}
\newcommand{\G}{\mathrm G}

\newcommand{\R}{\mathbb R}
\newcommand{\N}{\mathbb N}
\newcommand{\C}{\mathbb C}
\newcommand{\Sp}{\mathbb S}
\newcommand{\T}{\mathbb T}

\newcommand{\Z}{\mathbb Z}

\newcommand{\cE}{\mathcal E}
\newcommand{\cO}{\mathcal O}

\newcommand{\cC}{\mathcal C}

\newcommand{\cJ}{\mathcal J}
\newcommand{\cA}{\mathcal A}

\newcommand{\cU}{\mathcal U}

\newcommand{\ra}{\rightarrow}

\newcommand{\Ad}{\operatorname{Ad}}


\title{Resolution of the $k$-Dirac operator}
\author{Tom\'a\v s Sala\v c }
\thanks{
2010 Mathematics  Subject  Classification. Primary: 35N05, 58J10. Secondary 58A20\\
 Keywords: Resolution of overdetermined system, k-Dirac operator, k-Dirac complex, invariant differential complexes.\\
 The  research was partially supported by  the grant 17-01171S of the Grant Agency of the Czech Republic.\\
}

\begin{document}

\begin{abstract}
This is the second part in a series of two papers. The $k$-Dirac complex is a complex of differential operators which are naturally associated to a particular $|2|$-graded parabolic geometry. In this paper we will consider the $k$-Dirac complex over the homogeneous space of the parabolic geometry and  as a first result, we will prove that the $k$-Dirac complex is formally exact (in the sense of formal power series). Then we will  show that the $k$-Dirac complex  descends from an affine subset of the homogeneous space to a complex of linear and constant coefficient differential operators and  that the first operator in the descended complex is  the $k$-Dirac operator studied in Clifford analysis. The main result of this paper is that the descended complex is locally exact and thus it forms a resolution of the $k$-Dirac operator. 
\end{abstract}

\maketitle

\section{Introduction}
Let $\{\varepsilon_1,\ldots,\varepsilon_{2n}\}$ be the standard basis of $\R^{2n}$, $B$ be the standard inner product, $\Sp$ be the complex space of spinors (see \cite[Section 6]{GW}) of the complexified Clifford algebra of $(\R^{2n},B)$ and $U:=M(2n,k,\R)$ be the vector space of matrices of size $2n\times k$ with real coefficients. We we will use matrix coefficients $x_{\alpha i},\ \alpha=1,\dots,2n,\ i=1,\dots,k$ as coordinates on $U$ and we denote by $\partial_{x_{\alpha i}}$ the  coordinate vector fields. Let $\cC^\infty(U,\mV)$ be the space of  smooth functions on $U$ with values in a vector space $\mV$.
The differential operator 
\begin{align}\label{k-Dirac operator}
 \underline D_0&:\cC^\infty(U,\Sp)\ra\cC^\infty(U,\C^k\otimes_\C\Sp),&\\
 \underline D_0\psi&=\sum_{\alpha=1}^{2n}(\varepsilon_\alpha.\partial_{x_{\alpha 1}}\psi,\ \dots,\ \varepsilon_\alpha.\partial_{x_{\alpha k}}\psi)&\nonumber
\end{align}
is known (see  \cite{CSSS} and \cite{SSSL}) as   the $k$-\textit{Dirac operator}. Here $\varepsilon_\alpha.$ is the usual action of $\varepsilon_\alpha$ on  $\Sp$ and we view $\C^k\otimes_\C\Sp$ as the vector space of $k$-tuples of spinors.  The  operator  generalizes  the $k$-Cauchy-Riemann operator just as the Dirac operator can be viewed as  a generalization of the Cauchy-Riemann operator.  We assume  $k\ge2$ throughout the article. 

The $k$-Dirac operator is an overdetermined, linear and first order differential operator with constant coefficients and so it is natural to look for a resolution of this operator, i.e. to look for a sequence of differential operators which starts with the $k$-Dirac operator and which is locally exact (loosely speaking the image of each operator in the sequence coincides with the kernel of the next operator on a sufficiently small neighborhood of any point $x\in U$). In other words, the associated  sequence of sheaves is exact. As a potential application, there is an open problem of characterizing the domains of monogenicity, i.e. an open set $\cU$ is a domain of monogenicity if at each boundary point there is a null solution of (\ref{k-Dirac operator}) which is defined on $\cU$ which cannot by continued beyond the boundary point by a null solution. Recall \cite[Section 4]{H}  that the Dolbeault resolution together with some $L^2$ estimates are crucial in a proof of the statement that any pseudoconvex domain is a domain of holomorphy.

If\footnote{This condition  is known as the stable range (see \cite{CSSS} and \cite{SSSL}).}  $n\ge k$, then  we will show (see Theorem \ref{the main theorem}) that such a resolution is obtained by descending the $k$-\textit{Dirac complex}. The $k$-Dirac complex (see \cite{S} and  \cite{TS})  is a complex of linear differential operators which are naturally associated   to a certain parabolic geometry of type $(\G,\gP)$ where  $\G:=\Spin(k,2n+k)$ and $\gP$ is a parabolic subgroup associated to a $|2|$-grading on the Lie algebra  $\lag=\laso(k,2n+k)$ of $\G$. We will consider here the $k$-Dirac complex only over the homogeneous model $\G/\gP$.

From the point of view of the representation theory, the $k$-Dirac  complex belongs to so called singular central (or infinitesimal) character  and so this complex does not come from the BGG machinery introduced in \cite{CSlS}. It is explained in \cite{S}  that the $k$-Dirac complex can be constructed using the machinery of the Penrose transform which means that the $k$-Dirac complex is the direct image of a relative BGG complex. Even though this might seem a bit clumsy when one is interested only in proving the existence of the complex, the advantage of this approach is that  one  can show (see \cite[Theorem 7.14]{S}) that the $k$-Dirac complex is formally exact in the sense of  \cite{Sp}, i.e. it induces a long exact sequence  of infinite weighted jets  at any  fixed point. 

The Penrose transform (see \cite{BE}) uses powerful tools of sheaf theory in the realm of complex manifolds, in particular  the Bott-Borel-Weil theorem is crucial. This brings in one technical point, namely  in \cite{S} we construct  the $k$-Dirac complex over the homogeneous space $\G^\C/\gP^\C$ of a complex parabolic geometry of type $(\G^\C,\gP^\C)$ where $\G^\C:=\Spin(2m,\C),\ m=k+n$ and $\gP^\C$ is a parabolic subgroup that is associated to the "complexified" $|2|$-grading on $\lag_\C:=\lag\otimes\C$. However, we will observe in Section \ref{section embedding} that a linear and $\G^\C$-invariant operator on $\G^\C/\gP^\C$ induces a linear and $\G$-invariant differential operator on $\G/\gP$ and thus, the complex from \cite{S} induces the $k$-Dirac complex on $\G/\gP$. This passage is  also in Introduction of \cite{B} where it is explained that the $k$-Cauchy-Fueter operator (see for example \cite{BAS} and \cite{CSS}) can  be viewed as the first  operator  in one of the quaternionic complexes which lives on the Grassmannian of complex $2$-planes in $\C^{2k+2}$.

Let us also mention that the assumption $n\ge k$ is not  needed in this article but only in the first part of the series \cite{S}. However, it is plausible (see  \cite{K}) that the machinery of the Penrose transform works also in the case $n<k$. Unfortunately, due to the representation theory, the Penrose transform does not work for the $k$-Dirac operator in dimension $2n+1$, even though (see \cite{F}), there are $k$-Dirac complexes also in odd dimensions but it is not clear whether these complexes are formally exact.

Viewing a differential operator as the direct image of another differential operator is not very useful when one is interested in local formulas. The differential operators in the $k$-Dirac complex are of  first and second order. While the first order operators can be handled rather easily (see  \cite{SS}), dealing with  second order operators is  more difficult. Local formulas for the second order operators in the $k$-Dirac complex were given  in \cite{TS}. 

As we mentioned above, the resolution of (\ref{k-Dirac operator}) is obtained by descending the $k$-Dirac complex. The descending of differential operators which are natural to parabolic structures  was developed in the recent series of papers  \cite{CSaI}, \cite{CSaII} and \cite{CSaIII}  with preliminary paper \cite{CSa} in full generality for parabolic contact structures.  The parabolic geometry of type $(\G,\gP)$ is contact if, and only if $k=2$. Thus, for $k>2$ we have to consider a higher dimensional analogue of the construction. Nevertheless, as we will work only on an open, dense and affine subset of $\G/\gP$ which is known as the "big cell", the descending procedure will turn out to be rather easy and we will not need a general theory. 

Finally, let us summarize the main results of this article. We will fix a "canonical" trivialization of the canonical $\gP$-principal bundle over the big cell so that we can view sections of associated vector bundles as vector valued functions and operators in the $k$-Dirac complex as differential operators with polynomial coefficients. In this picture, it is standard to talk about formal power series and we will show in  Theorem \ref{thm exactnes with formal power series} that the formal exactness of the $k$-Dirac complex implies the exactness of the complex with formal power series at any fixed point.   In Theorem \ref{the main theorem} we will prove that the descended complex is a resolution of the $k$-Dirac operator.

\bigskip

Acknowledgement. The author is grateful to Vladim\'ir Sou\v cek for his support and many useful conversations. The author would also like to thank to Luk\'a\v s Krump for the possibility of using his package for Young diagram.  The author wishes to thank to the unknown referee for many helpful suggestions which considerably improved and simplified the current manuscript. 

\begin{center}
\textbf{ Notation}
\end{center}
$M(n,k,\T)$ matrices of size $n\times k$ with coefficients from a field $\T$\\
$M(n,\T):=M(n,n,\T)$\\
$A(n,\T):=\{A\in M(n,\T)|\ A^T=-A\}$\\
$1_n=$  identity  $n\times n$-matrix\\
$[v_1,\dots,v_\ell]=$ linear span of vectors $v_1,\dots,v_\ell$\\
$L(\mV,\mW)$ the space of linear maps $\mV\ra\mW$

\section{The parabolic geometry of type $(\G,\gP)$}\label{section parabolic geometry}
We will define in Section \ref{section lie algebra} the $|2|$-grading on $\lag$  which determines the parabolic subgroup $\gP$ from Introduction. We will recall in Section \ref{section homogeneous space} some well known theory of parabolic geometries, namely we will need some basic properties of invariant differential operators over the homogeneous space $\G/\gP$ and the filtration of the tangent bundle associated to any $|2|$-graded parabolic geometry. In Section \ref{section affine subset} we will move to the open, dense and affine subset of $\G/\gP$ which is also called the big cell. We  will identify the big cell with a Lie group and fix a trivialization of the $\gP$-principal bundle over the big cell  which leads to several simplifications. In Section \ref{section invariant op and wjets} we will recall some well known theory of invariant differential operators  over the big cell and recall the notion of weighted jets. In Section \ref{section descending natural operators} we will explain that a linear and invariant operator on the big cell descends to a linear and constant coefficient operator on the affine set $U= M(2n,k,\R)$. In Section \ref{section embedding} we will consider the complex parabolic geometry of type $(\G^\C,\gP^\C)$ from  Introduction.

A comprehensive introduction into the theory of parabolic geometries is \cite{CS}. The concept of weighted jets was originally introduced by Tohru Morimoto, see for example \cite{Mo} or \cite{MoI}. As we will work only over the big cell, we will not give here the definition of weighted jets over a general filtered manifold as over the big cell (which we view as a Lie group) we can define the weighted jets via left invariant vector fields. 

\subsection{Lie algebra  $\lag$}\label{section lie algebra}
Let $\{e_1,\ldots,e_k,\varepsilon_1,\ldots,\varepsilon_{2n},e^1,\ldots,e^k\}$ be the standard basis of $\R^{2m}$ where $m=n+k$ and $h$ be the symmetric bilinear form such that 
$$h(e_i,e^j)=\delta_{ij},\ h(e_i,\varepsilon_\alpha)=h(e^i,\varepsilon_\alpha)=0,\ h(\varepsilon_\alpha,\varepsilon_\beta)=-\delta_{\alpha\beta}$$
where $i,j=1,\dots,k;\ \alpha,\beta=1,\dots,2n$ and $\delta$ is the Kronecker delta. Notice that the associated quadratic form is non-degenerate with signature $(k,2n+k)$. We will sometimes write $\R^{k,2n+k}$ to indicate that we consider on $\R^{2m}$ the bilinear form $h$. 
The associated Lie algebra $\lag:=\laso(h)\cong\laso(k,2n+k)$ is 
\begin{equation}\label{orthogonal Lie alg}
\Bigg\{\left(\begin{array}{ccc}
A&Z^T&W\\
X&B&-Z\\
Y&-X^T&-A^T
\end{array}\right)\Bigg|\begin{matrix}
A\in M(k,\R),Y,W\in A(k,\R),\\
B\in A(2n,\R),X,Z\in M(2n,k,\R)
\end{matrix}\Bigg\}.
\end{equation}

The block decomposition  determines a direct sum decomposition $\lag=\lag_{-2}\oplus\lag_{-1}\oplus\lag_0\oplus\lag_1\oplus\lag_2$ where we put
\begin{eqnarray}\label{gradation on g}
&\lag_{-2}:=\Bigg\{
\left(\begin{matrix}
0&0&0\\
0&0&0\\
\ast&0&0\\
\end{matrix}
\right)
\Bigg\},\
\lag_{-1}:=\Bigg\{
\left(\begin{matrix}
0&0&0\\
\ast&0&0\\
0&\ast&0\\
\end{matrix}
\right)
\Bigg\},\ 
\lag_{0}:=\Bigg\{
\left(\begin{matrix}
\ast&0&0\\
0&\ast&0\\
0&0&\ast\\
\end{matrix}
\right)
\Bigg\},\nonumber&\\ 
&\lag_{1}:=\Bigg\{
\left(\begin{matrix}
0&\ast&0\\
0&0&\ast\\
0&0&0\\
\end{matrix}
\right)
\Bigg\},\ 
\lag_{2}:=\Bigg\{
\left(\begin{matrix}
0&0&\ast\\
0&0&0\\
0&0&0\\
\end{matrix}
\right)
\Bigg\}&
\end{eqnarray}

It is straightforward to verify that 
\begin{enumerate}[(I)]
\item $[\lag_i,\lag_j]\subset\lag_{i+j}$ where $i,j\in\Z$ and  we agree that $\lag_i=\{0\}$ if $|i|>2$ and
\item $\lag_{-1}$ generates $\lag_-:=\lag_{-2}\oplus\lag_{-1}$ as a Lie algebra.
\end{enumerate}
Hence, the direct sum decomposition is a $|2|$-grading (see \cite[Definition 3.1.2]{CS}). The filtration associated to the grading is $\lag=\lag^{-2}\supset\lag^{-1}\supset\dots\supset\lag^{2}\supset\{0\}$ where we put $\lag^i:=\bigoplus_{j\ge i}\lag_j$. From the property (I) follows that $\lag_-,\ \lag_0$ and $\lag^i$ where $i\ge0$ are subalgebras  of $\lag$ and that  $\lag_-$ and $\lag^i,\ i\ge1$ are nilpotent. We call $\lap:=\lag^0$ the \textit{parabolic subalgebra} associated to the grading and put $\lap_+:=\lag^1$. Then  $\lap=\lag_0\oplus\lap_+$ is the Levi-decomposition of $\lap$ which means that $\lag_0$ is a maximal reductive subalgebra of $\lap$ called  \textit{the reductive Levi factor} and that $\lap_+$ is the nilradical of $\lap$ (see \cite[Section 2.1.8]{CS}).  

The Killing form  induces (see   \cite[Proposition 3.1.2]{CS}) isomorphisms  $\lag_i\cong\lag_{-i}^\ast$ of $\lag_0$-modules where ${}^\ast$ denotes the dual module and $i=-2,\dots,2$. By the same Proposition,  there is a \textit{grading element}  $E\in\lag$  which  is uniquely determined by   $\lag_i=\{X\in\lag:[E,X]=iX\},\ i=-2,-1,\dots,2$. The grading element associated to (\ref{gradation on g}) corresponds to the block matrix  from (\ref{orthogonal Lie alg}) where $A=1_k$ and all the other matrices  are  zero. 	

The algebra $\lag_0$ is isomorphic to $\lagl(k,\R)\oplus\laso(2n)$. Assume that $\mE$ and $\mF$ are $\lagl(k,\R)$ and $\laso(n)$-modules, respectively. Then $\mE$ and $\mF$ are also $\lag_0$-modules by letting the other factor act trivially. We denote by $\mE\boxtimes\mF$ the exterior tensor product of $\mE$ and $\mF$. Notice that the subspaces $[e_1,\dots,e_k]$ and $[\varepsilon_1,\dots,\varepsilon_{2n}]$ are $\lag_0$-invariant and that they are isomorphic to the $\lag_0$-modules $\R^k$ and $\R^{2n}$, respectively. Now it is easy to see  that there are isomorphisms of $\lag_0$-modules:
\begin{equation}\label{g0 pieces in lap}
\lag_{-2}\cong\Lambda^{2}\R^{k\ast},\  \lag_{-1}\cong\R^{k\ast}\boxtimes\R^{2n},\  \lag_1\cong\R^k\boxtimes\R^{2n}\ \ \mathrm{and}
 \ \ \lag_2\cong\Lambda^{2}\R^k
\end{equation}
where we use the $\lag_0$-invariant inner product $h|_{\R^{2n}}$.
 The standard basis of $\R^{2m}$ gives the bases
\begin{equation}\label{bases of g-}
\{e^r\wedge e^s:\ 1\le r<s\le k\} \ \mathrm{and}\ \{e^i\otimes\varepsilon_\alpha:\ i=1,\dots,k,\ \alpha=1,\dots,{2n}\}
\end{equation}
of $\lag_{-2}$ and  $\lag_{-1}$, respectively, and the dual bases 
\begin{equation}\label{dual basis}
\{e_{r}\wedge e_{s}:\ 1\le r<s\le k\}\ \mathrm{and}\ \{e_i\otimes\varepsilon_{\alpha}:\ i=1,\dots,k,\ \alpha=1,\dots,n\}
\end{equation}
of $\lag^2$ and $\lag^1$, respectively. 

  
Finally, let us consider the Lie bracket $\Lambda^2\lag_-\ra\lag_-$. This map is homogeneous of degree zero and thus, we can look at each homogeneous part separately. It is the clear that the map  is non-zero only in the homogeneity -2. Moreover, by the Jacobi identity, the map is $\lag_0$-equivariant. If we use the isomorphisms from (\ref{g0 pieces in lap}), then it is easy to see that there is (up to constant) a unique $\lag_0$-equivariant map which is the composition   
\begin{equation}\label{lie bracket on g-}
\Lambda^2\lag_{-1}\cong\Lambda^2(\R^{k\ast}\boxtimes\R^{2n})\ra\Lambda^2\R^{k\ast}\boxtimes S^2\R^{2n}\ra\Lambda^2\R^{k\ast}\cong\lag_{-2}
\end{equation}
where the first map is the obvious projection and in the second map  we  take the trace with respect to $h|_{\R^{2n}}$. Notice that the map (\ref{lie bracket on g-}) is non-degenerate.

\subsection{Lie groups, homogeneous space and invariant differential operators}\label{section homogeneous space}
Let $\G:=\Spin(k,2n+k)$ and  $\Ad:\G\ra\GL(\lag)$ be the adjoint representation.  We call
\begin{equation}\label{parabolic subgroup}
\gP:=\{g\in\G: \Ad(g)(\lag^i)\subset\lag^i;\ i=-2,\dots2\}
\end{equation}
the \textit{parabolic subgroup} associated to the 2-grading and 
\begin{equation}\label{Levi factor}
 \G_0:=\{g\in\G: \Ad(g)(\lag_i)\subset\lag_i;\ i=-2,\dots2\}
\end{equation}
the \textit{Levi subgroup} of $\gP$. Then $\gP$ is a closed subgroup of $\G$ with Lie algebra  $\lap$ and  $\G_0$ is a closed subgroup of $\gP$ with Lie algebra $\lag_0$. It can be shown  (see \cite{TSIII})  that $\G_0$ is isomorphic to $\GL(k,\R)\times\Spin(2n)$. We put $\gP_+:=\exp(\lap_+)$.
By \cite[Theorem 3.1.3]{CS}, the map $\exp:\lap_+\ra\gP_+$ is a diffeomorphism, $\gP_+$ is a normal subgroup of $\gP$ and  $\gP=\G_0\ltimes\gP_+$. Hence, there is a surjective Lie group homomorphism $\gP\ra\G_0$ and thus, any $\G_0$-module is  also a $\gP$-module with trivial action of  $\gP_+$. It is well known that any irreducible $\gP$-module arises in this way. 

Let us recall that the homogeneous space $\G/\gP$ of the Cartan geometry of type $(\G,\gP)$ carries a natural distribution of co-dimension ${k\choose2}$. Let $p:\G\ra\G/\gP,\ g\mapsto g.\gP$ be the canonical projection and  $\ell_g:\G\ra\G$ and $r^g:\G\ra\G$ be the multiplication by $g\in\G$ on the left and on the right, respectively. As $\ell_g$ is a diffeomorphism, the map $T\ell_{g^{-1}}:T_g\G\ra T_e\G=\lag$ is a linear isomorphism and the 1-form $\omega\in\Omega^1(\G,\lag)$  defined by $\omega_g(X)= T\ell_{g^{-1}}(X),\ X\in T_g\G$ is the \textit{Maurer-Cartan form} on $\G$. We put $T^{-1}_g\G:=\omega^{-1}_g(\lag^{-1})$ so that $T^{-1}\G:=\bigcup_{g\in\G}T^{-1}_g\G$ is a distribution on $\G$. As $\omega$ is $\gP$-equivariant in the sense that for every $g\in\G:\ (r^g)^\ast\omega=\Ad(g^{-1})\circ\omega$ and $\lag^{-1}$ is a $\gP$-invariant subspace, it follows that $T^{-1}\G$ projects under $Tp$ to a well-defined distribution $H$ of co-dimension ${k\choose2}$.

\medskip

Assume now that $\mV$ is an irreducible $\gP$-module. Then the space $\Gamma(V)$ of smooth sections of the associated vector bundle $V:=\G\times_\gP\mV$ is by \cite[Proposition 1.2.7]{CS} isomorphic to the space $\cC^\infty(\G,\mV)^\gP$ of smooth $\mV$-valued $\gP$-equivariant functions on $\G$.  More generally, if $\cU$ is an open subset of the homogeneous space and $V|_\cU$ is the restriction of $V$ to $\cU$, then $\Gamma(V|_\cU)\cong\cC^\infty(p^{-1}(\cU),\mV)^\gP$. The group $\G$ has a canonical action on $\cC^\infty(\G,\mV)$ given by $ g.f= f\circ\ell_{g^{-1}}$ where $f\in\cC^\infty(\G,\mV)$ is a smooth $\mV$-valued function on $\G$. As the multiplication on the right commutes with the multiplication on the left, this action descends to an action on $\cC^\infty(\G,\mV)^\gP$. Using the isomorphism above, this induces a $\G$-action on $\Gamma(V)$.

Assume that $\mW$ is another  irreducible $\gP$-module and that 
\begin{equation}\label{global invariant operator}
D:\Gamma(V)\ra\Gamma(W) 
\end{equation}
is a linear differential operator of order $r$ where $W:=\G\times_\gP\mW$. We call $D$ a $\G$-\textit{invariant} operator (or simply invariant) if it intertwines  the action of $\G$ on $\Gamma(V)$ and $\Gamma(W)$. Let $\cJ^r\mV$ be the vector space of $r$-jets of germs of sections of  $V$ at the origin $e\gP\in\G/\gP$ where $e\in\G$ is the identity element. Then $\cJ^r\mV$ is a $\gP$-module and $D$  determines a $\gP$-equivariant  map
\begin{equation}\label{P-map over origin}
\cJ^r\mV\ra\mW, \ \ \ j^rf\mapsto Df(e\gP)
\end{equation}
where $j^rf$ is the $r$-th jet of a germ $f$ of a smooth section of $V$ at $e\gP$.
It is well known (see \cite[Section 1.4.10]{CS}) that this assignment is a bijection, i.e. a $\gP$-equivariant homomorphism $\cJ^r\mV\ra\mW$ determines a unique linear and $\G$-invariant differential operator of order at most $r$. The order of the induced operator is $<r$ if the homomorphism (\ref{P-map over origin}) factorizes through the canonical projection $\cJ^r\mV\ra\cJ^{r-1}\mV$.

\subsection{Affine subset of the homogeneous space}\label{section affine subset}
The group $\G$ has a canonical action on the Grassmannian variety  of totally isotropic  $k$-dimensional subspaces in $\R^{k,2n+k}$. It is straightforward to verify that $\gP$ is the stabilizer of the totally isotropic subspace $[e_1,\dots,e_k]$. Hence, we can view $\G/\gP$  as the isotropic Grassmannian and we will do that without further comment. We will now consider an affine subset of the homogeneous space. We will use the following convention. If $A=(a_{ij})\in M(2m,k,\R)$ has maximal rank $k$, then we denote by $[a_{ij}]$ the $k$-dimensional subspace that is spanned by the columns of the matrix $A$. 
\medskip

The restriction of  $p\circ\exp$ to $\lag_-$ induces a map
\begin{align}\label{coordinates on osu}
\lag_-&\ra\G/\gP, \ \ 
\left(
\begin{array}{ccc}
0&0&0\\
X&0&0\\
Y&-X^T&0\\
\end{array}
\right) 
\mapsto
\left[
\begin{matrix}
1_k\\
X\\
Y-\frac{1}{2}X^TX\\
\end{matrix}
\right].
\end{align}
The map is injective and a moment of thought shows that it is a diffeomorphism onto its image which  is an open, dense and affine subset of $\G/\gP$ which is known as the "big cell", see \cite[Example 5.1.12]{CS}. Let us  write $X=(x_{\alpha i}),\ Y=(y_{rs})$ so that we can use $x_{\alpha i},\ \alpha=1,\dots,2n,\ i=1,\dots,k$ and $y_{rs},\ 1\le r<s\le k$ as coordinates on the big cell. It will be convenient to view the big cell as a Lie group with Lie algebra $\lag_-$. 

Put $\G_-:=\exp(\lag_-)$. Then $\G_-$ is a closed subgroup of $\G$ with Lie algebra $\lag_-$, the exponential map is a diffeomorphism $\lag_-\ra\G_-$ and  thus, the composition $\G_-\xrightarrow{\iota}\G\xrightarrow{p}\G/\gP$, where $\iota$ is the inclusion, induces a diffeomorphism between $\G_-$ and the big cell.  Hence, we can view $\G_-$ as the big cell
and the inclusion $\iota$ as a section of $p^{-1}(\G_-)\ra\G_-$. This section determines a trivialization  $\Phi:\G_-\times\gP\ra p^{-1}(\G_-)$ of the $\gP$-principal bundle over $\G_-$. As $p^{-1}(\G_-)\subset\G$, the map $\Phi$ is simply given by  multiplication on $\G$.

\medskip

Let  $\mV$ be a $\gP$-module and $V$ be the associated vector bundle as in Section \ref{section homogeneous space}. Recall that  $\Gamma(V|_{\G_-})\cong\cC^\infty(p^{-1}(\G_-),\mV)^\gP$. As any $\gP$-equivariant function on $p^{-1}(\G_-)$ is determined by its restriction to $\G_-$, it follows that the map  
$$\cC^\infty(p^{-1}(\G_-),\mV)^\gP\ra\cC^\infty(\G_-,\mV),\ f\mapsto f\circ\iota$$
is an isomorphism of vector spaces. We see that the restriction of the differential operator (\ref{global invariant operator}) to $\G_-$ induces (in the trivialization $\iota$) a linear differential operator 
\begin{equation}\label{invariant operator over affine subset}
 \cC^\infty(\G_-,\mV)\ra\cC^\infty(\G_-,\mW).
\end{equation}
We will denote this operator also by $D$ as there is no risk of confusion. If the operator $D$ is invariant, then it is straightforward to verify that  (\ref{invariant operator over affine subset}) is a $\G_-$-invariant differential operator with respect to the canonical $\G_-$-action on both spaces (that is induced by multiplication on left). Let us now recall well known classification of linear and $\G_-$-invariant differential operators.

\subsection{Invariant differential operators and weighted jets}\label{section invariant op and wjets}
Let us first recall the definition of the universal enveloping algebra associated to $\lag_-$.  In order to simplify notation, we write $gr(X)=i$ if, and only if $X\in\lag_i$ and $i\in\{-1,-2\}$. Recall Section \ref{section lie algebra} that  the Lie bracket on $\lag_-$ is compatible with the grading in the sense that  $[\lag_{i},\lag_{j}]\subset\lag_{i+j}$ where $i,j=-1,-2$ and we agree that $\lag_{\ell}=\{0\}$ whenever $\ell<-2$. 

\medskip 

The \textit{universal enveloping algebra} $\cU(\lag_-)$ associated to $\lag_-$ is the quotient of the tensor algebra $T(\lag_-)$ of $\lag_-$ by the both sided ideal $I$  that  is generated by the elements of the form: $X\otimes Y-Y\otimes X-[X,Y]$, \ $X,Y\in\lag_-$. The grading on $\lag_-$ induces a grading  $T(\lag_-)=\bigoplus_{r\ge0}T_r(\lag_-)$ on the tensor algebra  where $T_{r}(\lag_-)$ is the linear span of all elements  $X_{i_1}\otimes\dots \otimes X_{i_\ell}$ where $\sum_{j=1}^\ell gr(X_{i_j})=-r$. Notice that the  grading is compatible with  multiplication.  As the Lie bracket on $\lag_-$ is compatible with the grading, it is easy to see that $I=\bigoplus_{r\ge0}(T_r(\lag_-)\cap I)$. Thus the grading on the tensor algebra induces a grading  $\cU(\lag_-)=\bigoplus_{r\ge0}\ \cU_{r}(\lag_-)$  where $\cU_{r}(\lag_-):=T_{r}(\lag_-)/(T_{r}(\lag_-)\cap I)$. This grading will be crucial later on.

It is well known that $\cU(\lag_-)$ is isomorphic to the algebra of left invariant differential operators acting on $\cC^\infty(\G_-)$. On the generators of $\cU(\lag_-)$, the isomorphism is given by $X+I\mapsto L_X$ where $X\in\lag_-$ and $L_X$ is the left invariant vector on $\G_-$ whose value at the identity element $e$  is $X$. In the sequel, we will view the algebra of linear differential operators acting $\cC^\infty(\G_-)$  as $\cU(\lag_-)$ without further comment.

A straightforward computation shows that
\begin{equation}\label{invariant vector fields in coordinates}
L_{e^i\otimes\varepsilon_\alpha}=\partial_{x_{\alpha i}}-\sum_{j=1}^k\frac{1}{2}x_{\alpha j}\partial_{y_{ij}}\ \ \mathrm{and}\ \ L_{e^r\wedge e^s}=\partial_{y_{rs}} 
\end{equation}
where we use the bases of $\lag_{-1}$ and $\lag_{-2}$ given in (\ref{bases of g-})  and the convention $\partial_{y_{rs}}=-\partial_{y_{sr}}$. Check that the  bracket of vector fields is compatible with the Lie bracket on $\lag_-$ given in (\ref{lie bracket on g-}). 
From the definition, it immediately follows that $H|_{\G_-}$ is (as a $\cC^\infty(\G_-)$-module) generated by $\{L_X:\ X\in\lag_{-1}\}$. It follows
from (\ref{invariant vector fields in coordinates})  that any $D\in\cU(\lag_-)$ is a differential operator with polynomial coefficients. More generally, the vector space of linear and left invariant differential operators $\cC^\infty(\G_-,\mV)\ra\cC^\infty(\G_-,\mW)$ is isomorphic to the tensor product $L(\mV,\mW)\otimes\cU(\lag_-)$.

\medskip

The algebra of polynomials on the vector  space $\lag_-$  is naturally isomorphic to the symmetric algebra $S(\lap_+)$. The grading $\lap_+=\lag_1\oplus\lag_2$ induces a grading 
\begin{equation}\label{polynomial over affine set}
S(\lap_+)=\bigoplus_{r\ge0}\xS^r\lap_+\ \ \mathrm{where} \ \ \xS^r\lap_+=\bigoplus_{\ell=0}^{\lfloor \frac{r}{2}{\rfloor}} S^\ell\lag_2\otimes S^{r-2\ell}\lag_1 
\end{equation}
and $\lfloor\ \rfloor$ denotes the integer part. The grading on $S(\lap_+)$ is obviously compatible with  multiplication and so $S(\lap_+)$ is a graded algebra. As $\lag_-$ is diffeomorphic to the big cell $\G_-$, we can view $S(\lap_+)$ as polynomials on $\G_-$. Then there is a natural pairing 
\begin{equation}\label{duality}
 \cU(\lag_-)\otimes S(\lap_+)\ra\R,\ \ (D,f)\mapsto Df(e)
\end{equation}
which yields a duality $\cU_r(\lag_-)^\ast\cong \xS^r\lap_+$.

\medskip

Finally, let us recall the concept of weighted jets. Let $\cE(\mV)$ be the vector space of germs of smooth sections of the associated bundle $V$ at $e\gP$ from Section \ref{section homogeneous space}. If  $f\in\cE(\mV)$, then $\underline f:=f\circ\iota$ is a germ of a smooth $\mV$-valued function at $e\in\G_-$.  We write $f\sim_r g$ if $g\in\cE(\mV)$ and $D\underline f(e)=D\underline g(e)$ for each $D\in\cU_\ell(\lag_-),\ \ell\le r$. It is well known that the equivalence class $\sim_r$ of $f$ does not depend on the choice of the trivialization. We denote by $\xJ^r\mV$ the quotient of $\cE(\mV)$ by the subspace $\{f\in\cE(\mV):\ f\sim_{r+1}0\}$ and by $\xj^r f$ the class of $f$ in $\xJ^r\mV$. We call $\xJ^r\mV$  the space of \textit{weighted $r$-jets at $e$ associated to} $\mV$ and $\xj^rf$  the \textit{weighted $r$-jet} of $f$ at $e$.

As the subspace $\{f\in\cE(\mV):\ f\sim_{r+1}0\}$ is $\gP$-invariant, it follows that the standard action of $\gP$ on $\cE(\mV)$ descends to an action on $\xJ^r\mV$. As  $f\sim_r0$ whenever $f\sim_{r+1}0$, it follows that there is a canonical  surjective map $\xJ^{r}\mV\ra\xJ^{r-1}\mV$ and we denote its  kernel  by $\xgr^r\mV$. It is well known (see \cite{Mo}, \cite{MoI}, \cite{Ne} or \cite{TSII}) that there are linear isomorphisms
\begin{equation}\label{isom weighted jets}
\xgr^r\mV\ \cong\ \cU_r(\lag_-)^\ast\otimes\mV\ \cong\ \xS^r\lap_+\otimes\mV. 
\end{equation}
From the definitions, it easily follows  that for each $r\ge0$ there is a well-defined linear map $\cJ^r\mV\ra\xJ^r\mV$ which is surjective.  Here $\cJ^r\mV$ denotes the vector space of ordinary (unweighted) $r$-jets.

\begin{prop}\label{thm dif op through jets}
Suppose that $\mV$ and $\mW$ are irreducible $\G_0$-modules on which the grading element $E$  acts with eigenvalues $\lambda$ and $\mu$, respectively and that there is a nonzero, linear and $\G$-invariant differential operator (\ref{global invariant operator}). 

Then $r:=\lambda-\mu$ is a non-negative integer and:
\begin{enumerate}[(I)]
\item the induced  $\gP$-equivariant homomorphism (\ref{P-map over origin}) descends to a $\gP$-equivariant map
\begin{equation}\label{weighted P-equivariant map over origin}
\phi:\xJ^r\mV\ra\mW
\end{equation} 
which does not factorize through the canonical projection  $\xJ^r\mV\ra\xJ^{r-1}\mV$ and
\item the invariant differential operator (\ref{invariant operator over affine subset}) induced by $D$ belongs to $\cU_r(\lag_-)$.
\end{enumerate}
We call the integer $r$ the \textbf{weighted order} of the differential operator $D$.
 \end{prop}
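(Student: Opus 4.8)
The plan is to exploit the grading element $E$ and the fact that invariance is an equivariance condition under the full group $\G$, in particular under the one-parameter subgroup generated by $E$. First I would observe that the associated bundle $V = \G\times_\gP\mV$ carries a natural action of the one-parameter subgroup $t\mapsto\exp(tE)\in\G_0\subset\gP$, and that under the trivialization over the big cell this action corresponds to a combined action on the base $\G_-$ (by conjugation, which on $\lag_-$ scales $\lag_{-1}$ by $e^{-t}$ and $\lag_{-2}$ by $e^{-2t}$) and on the fibre $\mV$ (by the scalar $e^{t\lambda}$, since $E$ acts with eigenvalue $\lambda$). Explicitly, in coordinates $x_{\alpha i}, y_{rs}$ on $\G_-$, the pullback by $\exp(tE)$ rescales $x_{\alpha i}\mapsto e^{t}x_{\alpha i}$ and $y_{rs}\mapsto e^{2t}y_{rs}$ — i.e. it is exactly the grading-preserving dilation with respect to which $\cU_r(\lag_-)$ is the homogeneous-degree-$r$ part. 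This is the mechanism that forces homogeneity.

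Next I would run the invariance condition through this action. Writing $D$ in the trivialization as an element of $L(\mV,\mW)\otimes\cU(\lag_-)$, decompose it as $D=\sum_s D_s$ with $D_s\in L(\mV,\mW)\otimes\cU_s(\lag_-)$. Since $\exp(tE)$ acts on $\cC^\infty(\G_-,\mV)$ by $(e^{tE}.f)(g) = e^{t\lambda}\,f(\exp(-tE)g\exp(tE))$ (and similarly on $\cC^\infty(\G_-,\mW)$ with $\mu$ in place of $\lambda$), and $D$ intertwines these two actions, conjugating $D$ by $\exp(tE)$ multiplies the component $D_s$ by $e^{t(s+\mu-\lambda)}$. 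Invariance means all these conjugates agree for every $t$, so only the component with $s=\lambda-\mu$ survives. Since $D\neq0$, this component is nonzero, which forces $r:=\lambda-\mu$ to be one of the indices $s\ge0$ occurring in $\cU(\lag_-)=\bigoplus_{s\ge0}\cU_s(\lag_-)$, hence a non-negative integer, and it shows $D\in\cU_r(\lag_-)$, which is part (II).

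For part (I), I would translate the statement about $\cU_r(\lag_-)$ into the language of weighted jets using the duality $\cU_r(\lag_-)^\ast\cong\xS^r\lap_+$ and the isomorphism (\ref{isom weighted jets}), $\xgr^r\mV\cong\cU_r(\lag_-)^\ast\otimes\mV$. The $\gP$-equivariant map (\ref{P-map over origin}) $\cJ^r\mV\ra\mW$ is, by construction, given by pairing the $r$-jet with the symbol of $D$ together with the pointwise action $\mV\ra\mW$; since $D\in\cU_r(\lag_-)$ has no terms of weighted order $<r$, this pairing kills every germ $f$ with $f\sim_r 0$, hence factors through the quotient $\cJ^r\mV\ra\xJ^r\mV$ of the jet space by the weighted-order-$\le r$ equivalence (using the canonical surjection $\cJ^r\mV\ra\xJ^r\mV$ mentioned before the Proposition), giving the desired $\phi:\xJ^r\mV\ra\mW$. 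That $\phi$ does not factor through $\xJ^{r-1}\mV$ is exactly the statement that the weighted-order-$r$ part of $D$ — its restriction to $\xgr^r\mV$ — is nonzero, which is what we established. The main obstacle I anticipate is purely bookkeeping: getting the sign and normalization right in the action of $\exp(tE)$ (and hence the sign of $r=\lambda-\mu$), and carefully matching the three descriptions of the relevant $\gP$-module — ordinary jets $\cJ^r\mV$, weighted jets $\xJ^r\mV$, and the symbol pairing with $\cU_r(\lag_-)$ — so that the factorization claim is genuinely equivalent to $D$ having weighted order $r$. None of this is deep, but it is where the proof can go wrong.
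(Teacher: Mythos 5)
Your proposal is correct in substance and is essentially the paper's own argument run in the opposite direction: both proofs pivot entirely on the grading element. The paper works on the jet side: it first produces $\phi:\xJ^r\mV\ra\mW$ by composing \eqref{P-map over origin} with the surjection $\xJ^{2\ell}\mV\ra\cJ^\ell\mV$ from weighted onto ordinary jets, takes $r$ minimal so that $\phi$ is nonzero on $\xgr^r\mV$, reads off $r$ from the $E$-eigenvalues $\lambda,\lambda+1,\dots,\lambda+r$ on $\xJ^r\mV$ (with $\xgr^r\mV$ the top eigenspace), and only then deduces (II). You work on the operator side, conjugating $D=\sum_s D_s$ by $\exp(tE)$ to kill all but one weighted-homogeneous component, which gives (II) first; your passage back to (I) via the duality \eqref{duality} and the isomorphism \eqref{isom weighted jets} is exactly how the paper links ``nonzero on $\xgr^r\mV$'' with ``does not factor through $\xJ^{r-1}\mV$''. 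These are the same computation in dual pictures, so nothing essential is missing. The one place where the bookkeeping you flagged actually bites is the exponent: with $(e^{tE}.f)(g)=e^{t\lambda}f(\exp(-tE)\,g\exp(tE))$, a left-invariant field $L_X$ with $X\in\lag_{-1}$ picks up a factor $e^{+t}$ under this pullback, so $D_s$ conjugates to $e^{t(s+\lambda-\mu)}D_s$ and the surviving component is $s=\mu-\lambda$, not $\lambda-\mu$. Be aware, however, that the paper is internally inconsistent on exactly this sign: its own proof establishes $\mu=\lambda+r$, and Proposition \ref{thm k-Dirac complex} applies the result with $r=|a'|-|a|=\mu-\lambda>0$, so the formula $r=\lambda-\mu$ in the statement is evidently a misprint for $\mu-\lambda$. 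Your sign slip happens to reproduce the misprinted formula; once both signs are corrected, your argument is sound and matches the paper's.
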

 
\begin{proof}
By  Section \ref{section homogeneous space}, the differential operator $D$ is determined by a $\gP$-equivariant homomorphism $\lambda:\cJ^\ell\mV\ra\mW$ for some integer $\ell$. As we consider a $|2|$-graded geometry, there is a well-defined surjective map $\xJ^{2\ell}\mV\ra\cJ^\ell\mV$ which is $\gP$-equivariant and hence, there is an integer $r\ge0$ such that the operator $D$  induces and is completely determined by a $\gP$-equivariant map as in (\ref{weighted P-equivariant map over origin}). The map is invariantly defined by $\phi(\xj^rf)=Df(e\gP)$ where $f\in\cE(\mV)$. Without loss of generality we may assume that $\phi$ does not factorize through the canonical projection $\xJ^r\mV\ra\xJ^{r-1}\mV$ which is equivalent to the fact that $\phi$ is non-zero on $\xgr^r\mV$. Hence, $\phi$ induces an isomorphism between $\mW$ and an irreducible $\gP$-invariant subspace of $\xgr^r\mV$. By the assumptions, the grading element acts diagonalizably on $\xJ^r\mV$ with finitely many eigenvalues which are of the form $\lambda,\lambda+1,\lambda+2,\dots,\lambda+r$ where $\xgr^r\mV$ is the  eigenspace of $\lambda+r$. Recall the linear isomorphism $\xgr^r\mV\cong\xS^r\otimes\mV$ from (\ref{isom weighted jets}). From this  we see that $r$ is a non-negative integer and  also the claim in (I). 

From the definition of $\phi$ follows that the value of $D f$ at the origin $e\gP$ depends only on $\xj^rf$. This implies that  $D$, viewed as the $\G_-$-invariant operator (\ref{invariant operator over affine subset}), belongs to $\bigoplus_{i=0}^r\cU_{i}(\lag_-)$. Since $Df(e)=0$ whenever $\xj^{r-1}_ef=0$, it follows that $D\in\cU_r(\lag_-)$. The proof is complete.
\end{proof}

As we have seen in the proof of Proposition \ref{thm dif op through jets}, if $D$ is determined by the map (\ref{weighted P-equivariant map over origin}), then $Df(e\gP)$ where $f\in\cE(\mV)$ depends only $\xj^rf$. More generally, one can show (see \cite{Mo} or \cite{Ne}) that $\xj^sDf$ depends only $\xj^{r+s}f$ and thus, the operator $D$ induces for each $s\ge0$ a linear map
\begin{equation}\label{prolongation of differential operator}
\xgr^{r+s}D:\xgr^{r+s}\mV\ra\xgr^s\mW,\ \ \xj^{r+s}f\mapsto\xj^sDf
\end{equation}
which is called the $s$-\textit{th prolongation of} $D$. From the definition of the map (\ref{prolongation of differential operator}), it follows that there is a commutative diagram
\begin{equation}\label{commutative diagram with polynomials and weighted jets}
 \xymatrix{\xS^{r+s}\lap_+\otimes\mV\ar[d]\ar[r]^{D}&\ar[d]\xS^s\lap_+\otimes\mW\\
\xgr^{r+s}\mV\ar[r]^{\xgr^{r+s}D}&\xgr^s\mW &}
\end{equation}
where the vertical arrows are the isomorphisms from (\ref{isom weighted jets}) and in the first row we view  $\xS^{r+s}\lap_+\otimes\mV$ and $\xS^s\lap_+\otimes\mW$ as the vector spaces of $\mV$ and $\mW$-valued polynomials on $\G_-$, respectively, and $D$ as the corresponding differential operator on $\G_-$.

\subsection{Descending procedure}
\label{section descending natural operators}

Let $\G_{-2}:=\exp(\lag_{-2})$. Then it is easy to see that $\G_{-2}$ is a closed, abelian and normal subgroup of $\G_-$  with Lie algebra $\lag_{-2}$ and that $\exp:\lag_{-2}\ra\G_{-2}$ is a diffeomorphism. We will be interested in the space $\G_{-2}\setminus \G_-$ of right cosets which we for brevity denote by $U$. It is straightforward  to verify that the composition 
$$M(2n,k,\R)\cong\lag_{-1}\xrightarrow{\exp}\G_-\xrightarrow{q}\G_{-2}/\G_-= U,$$ 
where  $q$ is the canonical projection, is a diffeomorphism. For brevity,
we denote   the origin $\G_{-2}e$ of $U$ by $x_0$. 

Let $J_{x_0}^r(U,\mV)$ be  the vector space of usual $r$-jets of germs of smooth $\mV$-valued functions at the origin $x_0$. We denote by $gr^r\mV$ the kernel of the canonical projection $J^{r}_{x_0}(U,\mV)\ra J^{r-1}_{x_0}(U,\mV)$. The projection  $q$ induces a linear map  
\begin{equation}
q^\ast:\cC^\infty(U,\mV)\ra\cC^\infty(\G_-,\mV),\ \ f\mapsto f\circ q
\end{equation}
which is obviously injective. This map descends for each $r\ge0$ to injective linear maps
\begin{equation}\label{inclusion of jets}
J_{x_0}^r(U,\mV)\hookrightarrow\xJ^r\mV\ \ \mathrm{and} \ \ gr^r\mV\hookrightarrow\xgr^r\mV
\end{equation}
which we also denote by $q^\ast$. Notice that in this way, $gr^r\mV$ gets identified with the subspace $S^r\lag_1\otimes\mV\subset\xS^r\otimes\mV\cong\xgr^r\mV$.

\begin{prop}\label{thm descending}
Suppose that $\mV$ and $\mW$ are vector spaces of dimensions $s$ and $t$, respectively. Assume that 
\begin{equation}
{D}:\cC^\infty(\G_-,\mV)\ra\cC^\infty(\G_-,\mW). 
\end{equation}
is a linear and  $\G_-$-invariant operator differential operator from $\cU_r(\lag_-)$.
Then there is a unique linear differential operator 
\begin{equation}\label{descended operator}
 \underline D:\cC^\infty(U,\mV)\ra\cC^\infty(U,\mW)
\end{equation}
such that the following diagram
\begin{equation}
\xymatrix{\cC^\infty(\G_-,\mV)\ar[r]^{{D}}&\cC^\infty(\G_-,\mW)\\
\cC^\infty(U,\mV)\ar[u]^{q^\ast}\ar[r]^{\underline D}&\cC^\infty(U,\mW)\ar[u]^{q^\ast}} 
\end{equation}
commutes.
Moreover, $\underline D$ is a homogeneous constant coefficient operator of order $r$.
\end{prop}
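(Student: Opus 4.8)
The plan is to exploit the explicit formulas for the invariant vector fields in (\ref{invariant vector fields in coordinates}) together with the structure of $\cU_r(\lag_-)$ as a graded algebra. First I would observe that by the discussion preceding the proposition, $\cU(\lag_-)$ is identified with left-invariant differential operators on $\G_-$, and $D\in\cU_r(\lag_-)$ means $D$ is a linear combination of products $L_{Z_1}\cdots L_{Z_\ell}$ of left-invariant vector fields with $\sum \xgr(Z_j)=-r$. Tensoring with $L(\mV,\mW)$, we write $D=\sum_a A_a\otimes D_a$ with $A_a\in L(\mV,\mW)$ and $D_a\in\cU_r(\lag_-)$ a product of the basic vector fields $L_{e^i\otimes\varepsilon_\alpha}$ and $L_{e^r\wedge e^s}$. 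The key point is that, by (\ref{invariant vector fields in coordinates}), $L_{e^r\wedge e^s}=\partial_{y_{rs}}$ is differentiation in a coset direction of $\G_{-2}$, whereas $L_{e^i\otimes\varepsilon_\alpha}=\partial_{x_{\alpha i}}-\tfrac12\sum_j x_{\alpha j}\partial_{y_{ij}}$ is the sum of a coset-transverse derivative and a $y$-derivative.

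The main step is then to show that for a function $g=f\circ q$ pulled back from $U$ (so $g$ is independent of the $y$-coordinates), applying any $D_a\in\cU_r(\lag_-)$ produces again a function independent of $y$, and to compute it. Since $g$ is $y$-independent, $\partial_{y_{rs}}g=0$, so any $D_a$ that contains at least one factor $L_{e^r\wedge e^s}$ kills $g$ — but such a $D_a$ lies in $\cU_{r'}(\lag_-)$ with a strictly smaller transverse-derivative count; more precisely its weighted degree $r$ is achieved using at least one degree $-2$ generator, hence the number of $x$-derivatives it can contribute is at most $r-2$. The surviving terms are exactly the products of $r$ factors of the form $L_{e^i\otimes\varepsilon_\alpha}$, and on a $y$-independent $g$ each such factor acts as $\partial_{x_{\alpha i}}$ because the $-\tfrac12 x_{\alpha j}\partial_{y_{ij}}$ part annihilates anything not yet depending on $y$, and — this is the delicate bookkeeping — one checks inductively that after applying $L_{e^{i_1}\otimes\varepsilon_{\alpha_1}}\cdots L_{e^{i_p}\otimes\varepsilon_{\alpha_p}}$ to $g$ the result is still $y$-independent, so each successive factor again reduces to $\partial_{x_{\alpha i}}$. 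Thus $D(f\circ q)=\big(\sum_a A_a\,\underline D_a f\big)\circ q$ where $\underline D_a$ is the constant-coefficient operator on $U$ obtained from $D_a$ by the substitution $L_{e^i\otimes\varepsilon_\alpha}\rightsquigarrow\partial_{x_{\alpha i}}$ and $L_{e^r\wedge e^s}\rightsquigarrow 0$. Setting $\underline D:=\sum_a A_a\otimes\underline D_a$ gives an operator making the square commute; since $q^\ast$ is injective, $\underline D$ is the unique such operator.

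To finish, I would record the two extra properties. The operator $\underline D$ has constant coefficients by construction, since all the polynomial coefficients in (\ref{invariant vector fields in coordinates}) involve only the $x_{\alpha j}\partial_{y_{ij}}$ terms which are discarded. For the order and homogeneity: each surviving $D_a$ is a product of exactly $r$ first-order operators $\partial_{x_{\alpha i}}$ (if it were fewer, its weighted degree would be less than $r$; if more, more than $r$), so $\underline D_a$ is homogeneous of degree $r$ in the $\partial_{x_{\alpha i}}$ and hence a homogeneous constant-coefficient operator of order $r$, and the same holds for the sum $\underline D$ provided it is nonzero — and it is nonzero because $D$ is, as one can see by choosing a polynomial test function depending only on the $x$-variables.

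I expect the main obstacle to be the inductive verification that the composition of the operators $L_{e^i\otimes\varepsilon_\alpha}$ preserves $y$-independence and reduces termwise to the $\partial_{x_{\alpha i}}$: a priori one factor introduces a $y$-derivative acting on a function that a later factor has already made $y$-dependent, so one must argue carefully about the order in which the non-commuting pieces act, ideally by noting that $[\partial_{x_{\alpha i}}-\tfrac12\sum_j x_{\alpha j}\partial_{y_{ij}},\ \partial_{x_{\beta\ell}}-\tfrac12\sum_j x_{\beta j}\partial_{y_{\ell j}}]$ is a pure $\partial_y$ (indeed lies in $\lag_{-2}$, matching (\ref{lie bracket on g-})), so modulo the ideal of $y$-derivatives the $L_{e^i\otimes\varepsilon_\alpha}$ commute and act as $\partial_{x_{\alpha i}}$; everything else is routine.
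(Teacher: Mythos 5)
Your proposal is essentially correct and rests on the same computational core as the paper's proof, namely the identities $L_{e^i\otimes\varepsilon_\alpha}(q^\ast h)=q^\ast(\partial_{x_{\alpha i}}h)$ and $L_{e^r\wedge e^s}(q^\ast h)=0$ from (\ref{invariant vector fields in coordinates}). The only real difference is organizational: the paper first establishes the \emph{existence} of $\underline D$ abstractly, by characterizing the image of $q^\ast$ as the functions invariant under left translation by the normal subgroup $\G_{-2}$ and then invoking $\G_-$-invariance of $D$, and only afterwards uses the explicit vector fields to identify the order; you instead get existence directly from the monomial-by-monomial computation. Both work. Two remarks. First, the ``delicate bookkeeping'' you flag as the main obstacle is a non-issue: the induction is simply that if $g$ lies in the image of $q^\ast$ then $L_{e^i\otimes\varepsilon_\alpha}g=\partial_{x_{\alpha i}}g$ is again in the image of $q^\ast$, so no commutator analysis is needed (and in any case the commutators land in $\lag_{-2}$ and annihilate such $g$, as you note). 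Second, your closing claim that $\underline D\neq0$ whenever $D\neq0$ is false: for example $D=L_{e^1\wedge e^2}=\partial_{y_{12}}\in\cU_2(\lag_-)$ is nonzero but annihilates every pulled-back function, so $\underline D=0$; your proposed test function depending only on the $x$-variables is killed by such a $D$ and detects nothing. The paper itself only proves that $\underline D$ is homogeneous of order at most $r$ and explicitly allows $\underline D=0$, so this does not affect the substance of the proposition, but the argument you give for strict nonvanishing should be deleted.
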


\begin{proof}
By the definition, the invariance of ${D}$ means that  $({D} f)\circ\ell_{g^{-1}}={D}(f\circ\ell_{g^{-1}})$ for every $g\in\G_-$  where $\ell_{g^{-1}}:\G_-\ra\G_-$ is  multiplication by $g^{-1}$ on left.
Assume now that  $f\in\cC^\infty(\G_-,\mV)$ belongs to the image of $q^\ast$. It is elementary to see that this is equivalent to  $f\circ\ell_{g^{-1}}=f$ for every $g\in\G_{-2}$. Thus we see that there is a linear operator (\ref{descended operator}) as claimed. It remains to show that $\underline D$ is a homogeneous differential operator of order at most $r$ (which may be possibly zero). From (\ref{invariant vector fields in coordinates}), it easily follows that  for each $h\in\cC^\infty(U,\mV): \ L_{e^i\otimes\varepsilon_\alpha}(q^\ast h)=q^{\ast}\partial_{x_{\alpha i}}h$ and that $L_{e^r\wedge e^s}(q^\ast h)=0$. Hence, the last claim follows from the definition of $\cU_r(\lag_-)$ given in Section \ref{section invariant op and wjets}.
\end{proof}

With the notation from Proposition \ref{thm descending}, notice that there is for each $s\ge0$  a commutative diagram of linear maps
\begin{equation}\label{commutative diagram with jets and weighted jets}
\xymatrix{\xgr^{r+s}\mV\ar[rr]^{\xgr^{r+s}{D}}&&\xgr^s\mW\\
gr^{r+s}\mV\ar[u]^{q^\ast}\ar[rr]^{gr^{r+s}\underline  D}&&gr^s\mW\ar[u]^{q^\ast}} 
\end{equation}
where $gr^{r+s}\underline D$ is the standard linear map $j^{r+s} f\mapsto j^s \underline Df$ that is associated to any linear differential operator of order $r$ and $j^rf$ is the usual $r$-jet of $f$ at $x_0$.

\subsection{Complex parabolic geometry}\label{section embedding}
We will now consider the complex analogue of the parabolic geometry of type $(\G,\gP)$ from  Sections \ref{section lie algebra} and \ref{section homogeneous space}.  We will proceed in this Section  rather quickly as most  constructions are similar to the real case and we refer to \cite{S} for a more thorough introduction if necessary.

\medskip

We have $\C^{2m}=\R^{2m}\otimes\C$ and we denote by  $h^\C$ the $\C$-bilinear extension of $h$ to $\C^{2m}$. The complexification of $\lag$ is  $\laso(2m,\C)\cong\lag^\C:=\lag\otimes_\R\C$ and the direct sum decomposition $\lag^\C=\lag_{-2}^\C\oplus\lag_{-1}^\C\oplus\dots\oplus\lag_2^\C$, where $\lag_i^\C:=\lag_i\otimes_\R\C$, is  a $|2|$-grading on $\lag^\C$. The group $\G^\C:=\Spin(2m,\C)$ is a connected and simply connected complex Lie group with Lie algebra $\lag^\C$. There is an embedding $\zeta:\G\ra\G^\C$ such that $T_e\zeta:T_e\G\ra T_e\G^\C$ is the canonical inclusion $\lag=\lag\otimes\R\hookrightarrow\lag\otimes\C=\lag^\C$.
We denote by  $\gP^\C$ the parabolic subgroup associated  via (\ref{parabolic subgroup})  to the $|2|$-grading and  by $\G_0^\C$ the Levi factor which is defined as in (\ref{Levi factor}). The group $\G_0^\C$ is isomorphic to $\GL(k,\C)\times\Spin(2n,\C)$. It is obvious that $\zeta$ restricts to embeddings  $\gP\hookrightarrow\gP^\C$ and $\G_0\hookrightarrow\G_0^\C$ which also shows that $\zeta$ descends to an embedding $\varsigma:\G/\gP\hookrightarrow\G^\C/\gP^\C$.

The exponential image $\G_-^\C$ of $\lag_-^\C:=\lag_-\otimes\C$ is a closed subgroup of $\G^\C$ with  embedding $\iota^\C:\G_-^\C\ra\G^\C$. The composition $\G_-^\C \xrightarrow{\iota^\C}\G^\C\xrightarrow{p^\C}\G^\C/\gP^\C$ is a biholomorphism onto its image which identifies $\G_-^\C$ with an open, dense and affine subset of $\G^\C/\gP^\C$. It is clear that $\zeta$ restricts to an embedding $\G_-\hookrightarrow\G_-^\C$ and that the pair $\G_-\subset\G_-^\C$ looks as the pair $\R^\ell\subset\C^\ell$ where we for a moment put $\ell:={2nk+{k\choose2}}$. Hence,  $\G_-^\C$ is a complexification of the real analytic manifold $\G_-$. Since $\G/\gP$ and $\G^\C/\gP^\C$ are homogeneous spaces, also $\G^\C/\gP^\C$ is a complexification of the real analytic manifold $\G/\gP$ as in \cite[Definition 6.6]{A}.

\medskip

Assume now that $\mV$ is a complex irreducible $\gP^\C$-module. Then $\mV$ is also a complex irreducible  $\gP$-module by restriction. Let $\cO(\mV)$ be the space of germs of holomorphic sections of $V^\C:=\G^\C\times_{\gP^\C}\mV$ at  $e\gP^\C$ and $\cA^\omega(\mV)$ 
be the space of germs of real analytic sections of $V:=\G\times_\gP\mV$ at $e\gP$. If we view sections as equivariant functions, then it is easy to see that the pullback of $f\in\cO(\mV)$ along the inclusion $\zeta$ is  an element of $\cA^\omega(\mV)$. Hence, there is a well-defined map
\begin{equation}\label{isom hol and analytic germs}
 \zeta^\ast:\cO(\mV)\ra\cA^\omega(\mV).
\end{equation}
As $\G^\C/\gP^\C$ is a complexification of the real analytic manifold $\G/\gP$, the  following fact is well known.

\begin{lemma}
 The map (\ref{isom hol and analytic germs}) is an isomorphism of infinite dimensional vector spaces.
\end{lemma}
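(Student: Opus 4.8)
The plan is to exploit the fact, established just before the statement, that $\G^\C/\gP^\C$ is a complexification of the real analytic manifold $\G/\gP$ in the sense of \cite[Definition 6.6]{A}, together with the identification of sections of associated bundles with equivariant functions from \cite[Proposition 1.2.7]{CS}. First I would reduce the problem to the affine picture: by the discussion of the big cell, germs at $e\gP$ of real analytic sections of $V$ correspond (via the trivialization $\iota$) to germs at $e\in\G_-$ of $\mV$-valued real analytic functions, and similarly germs at $e\gP^\C$ of holomorphic sections of $V^\C$ correspond to germs at $e\in\G_-^\C$ of $\mV$-valued holomorphic functions; one checks that under these identifications the map $\zeta^\ast$ becomes simply restriction of holomorphic germs along the inclusion $\G_-\hookrightarrow\G_-^\C$, which as noted in Section \ref{section embedding} looks locally like $\R^\ell\subset\C^\ell$ with $\ell=2nk+\binom{k}{2}$.

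Having made this reduction, the statement becomes the classical fact that, for a germ of a real analytic function on $\R^\ell$ with values in a finite dimensional complex vector space $\mV$, there is a unique germ of a holomorphic function on $\C^\ell$ restricting to it. I would argue injectivity first: if a holomorphic germ on $\C^\ell$ vanishes on (a neighbourhood in) $\R^\ell$, then all its partial derivatives at the origin along the real directions vanish, hence all complex partial derivatives vanish by the Cauchy--Riemann equations, so its Taylor series is zero and the germ is zero by analyticity. For surjectivity, a real analytic germ $f$ on $\R^\ell$ has a Taylor series at the origin with a positive radius of convergence; the same power series, now regarded as a series in the complexified variables, converges on a polydisc in $\C^\ell$ and defines a holomorphic germ whose restriction to $\R^\ell$ is $f$. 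Linearity of $\zeta^\ast$ is immediate, so this gives the claimed isomorphism of (infinite dimensional) vector spaces.

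I do not expect a genuine obstacle here — the lemma is flagged as ``well known'' — so the only thing to be careful about is the bookkeeping that makes the reduction legitimate: one must check that the trivializations on the real and complex sides are compatible, i.e.\ that $\iota^\C$ restricts to $\iota$ under $\zeta$ (which is exactly the statement that $\zeta$ restricts to the embedding $\G_-\hookrightarrow\G_-^\C$, already recorded in Section \ref{section embedding}), and that passing to equivariant functions and then restricting to the big cell intertwines $\zeta^\ast$ with the honest restriction of functions. Once that is in place, everything reduces to the one-variable-style convergence argument applied coordinatewise to the finitely many components of a $\mV$-valued function, and the proof is complete.
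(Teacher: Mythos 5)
Your proof is correct. The paper itself offers no argument for this lemma --- it simply records that the statement is well known once $\G^\C/\gP^\C$ is recognized as a complexification of $\G/\gP$ --- and your reduction to the big cell, where $\zeta^\ast$ becomes restriction of germs along $\R^\ell\subset\C^\ell$, followed by the standard uniqueness-and-extension argument for real analytic versus holomorphic germs, is exactly the standard justification the paper is implicitly invoking.
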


Recall Section \ref{section invariant op and wjets} for the definition of $\xJ^r\mV, \ r\ge0$ and that there is a canonical projection $\cA^\omega(\mV)\ra\xJ^r\mV$. This  induces via the isomorphism (\ref{isom hol and analytic germs}) a surjective linear map $\cO(\mV)\ra\xJ^r\mV$. Hence, we can view $\xJ^r\mV$ as a quotient of $\cO(\mV)$ and $\xgr^r\mV$ as a subspace of this quotient space. Also we can  define the $r$-th weighted jet for any germ $f\in\cO(\mV)$. This shows that the vector spaces $\xJ^r\mV$ and $\xgr^r\mV$ are naturally $\gP^\C$-modules.

Assume now that $\mW$ is a complex irreducible representation of $\gP^\C$ and that there is a $\gP$-equivariant homomorphism (\ref{weighted P-equivariant map over origin}). Then the homomorphism is also $\gP^\C$-equivariant and it induces a $\G^\C$-invariant differential operator just as in the real case. Conversely, any such linear and $\G^\C$-invariant differential operator induces a $\gP^\C$-equivariant homomorphism as in (\ref{weighted P-equivariant map over origin}).  As this observation will be needed later on, let us formulate it as a lemma.

\begin{lemma}
\label{lemma real and holomorphic invariant operators}
Assume that $\mV$ and $\mW$ are complex and irreducible  representations of $\gP^\C$ and that there is a $\gP^\C$-equivariant homomorphism as in (\ref{weighted P-equivariant map over origin}). Then the homomorphism induces a linear and $\G^\C$-invariant operator 
\begin{equation}\label{holomorphic operator}
\G^\C\times_{\gP^\C}\mV\ra\G^\C\times_{\gP^\C}\mW
\end{equation}
of order at most $r$ where we for brevity write bundles instead of the associated sheaves of germs of holomorphic sections. 

Conversely, a linear and $\G^\C$-invariant differential operator  (\ref{holomorphic operator}) induces a $\gP^\C$-equivariant map (\ref{weighted P-equivariant map over origin}) for some integer $r\ge0$.
\end{lemma}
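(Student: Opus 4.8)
The plan is to prove the two directions separately, each essentially by transporting the corresponding statement from the real parabolic geometry (Section~\ref{section homogeneous space}) across the complexification machinery set up above, using that every object in sight ($\xJ^r\mV$, $\xgr^r\mV$, the jet projections) is simultaneously a $\gP$-module and a $\gP^\C$-module in a compatible way, and that the identification $\cO(\mV)\cong\cA^\omega(\mV)$ is $\gP^\C$- (hence $\gP$-) equivariant.

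For the first direction, suppose we are given a $\gP^\C$-equivariant homomorphism $\phi:\xJ^r\mV\to\mW$. First I would note that, since $\gP^\C$ acts through its quotient $\G_0^\C$ only after restriction and the weighted jet spaces carry the $\gP^\C$-action inherited from $\cO(\mV)$, one may run exactly the construction of Section~\ref{section homogeneous space} (in its weighted-jet reformulation, as in the proof of Proposition~\ref{thm dif op through jets}) in the holomorphic category: compose $\phi$ with the projection $\cO(\mV)\to\xJ^r\mV$ to get, for every germ $f\in\cO(\mV)$, a value $\phi(\xj^r f)\in\mW$; by $\G^\C$-invariance of the whole setup this defines a differential operator $D:\cO(V^\C)\to\cO(W^\C)$ of weighted order $\le r$, and the commutation with the $\G^\C$-action is immediate from the fact that $\phi$ is defined pointwise at the origin and the operator is reconstructed by translating. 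The point that needs a word of care is that a map out of weighted jets genuinely defines a \emph{holomorphic} differential operator (of ordinary order at most $r$, since weighted order $\le r$ implies ordinary order $\le r$ — the $\lag_{-2}$-directions only make the ordinary order smaller): this is where one invokes that $\xJ^{2r}\mV\to\cJ^r\mV$ is $\gP^\C$-equivariant, exactly as in the real case, so $\phi$ pulls back to an honest element of $\mathrm{Hom}_{\gP^\C}(\cJ^r\mV,\mW)$ and Section~\ref{section homogeneous space} applies verbatim over $\C$.

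For the converse, given a linear $\G^\C$-invariant differential operator $D$ as in~(\ref{holomorphic operator}), I would use the holomorphic analogue of~(\ref{P-map over origin}): $D$ restricted to germs at $e\gP^\C$ factors through some finite-order ordinary jet $\cJ^\ell\mV$, giving a $\gP^\C$-equivariant map $\cJ^\ell\mV\to\mW$; precomposing with the $\gP^\C$-equivariant surjection $\xJ^{2\ell}\mV\to\cJ^\ell\mV$ and then, as in the proof of Proposition~\ref{thm dif op through jets}, passing to the smallest $r$ through which it factors, yields the desired $\gP^\C$-equivariant map $\xJ^r\mV\to\mW$. The only real content beyond bookkeeping is that a $\G^\C$-invariant holomorphic operator has finite order and is determined by its origin jet — this is the holomorphic version of \cite[Section 1.4.10]{CS}, which holds because $\G^\C/\gP^\C$ is a homogeneous space and holomorphic differential operators form a sheaf of the same local character as smooth ones.

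The main obstacle, such as it is, is purely expository: making sure the $\gP^\C$-module structures on $\xJ^r\mV$ and $\xgr^r\mV$ used here are literally the ones already constructed (via $\cO(\mV)$ in Section~\ref{section embedding}) and that the reconstruction of the operator from the origin datum is $\G^\C$-equivariant on the nose; no new estimate or representation-theoretic input is needed, so the proof is a short remark citing Section~\ref{section homogeneous space}, Proposition~\ref{thm dif op through jets}, and the preceding discussion, with the sole substantive point being the equivalence of the ordinary and weighted jet descriptions in the holomorphic setting.
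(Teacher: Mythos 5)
Your proposal is correct and matches the paper's treatment: the paper states this lemma without a formal proof, justifying it by the observation that the jet-homomorphism/invariant-operator correspondence of Section \ref{section homogeneous space} and Proposition \ref{thm dif op through jets} carries over verbatim to the holomorphic category via the $\gP^\C$-module structures on $\xJ^r\mV$ defined through $\cO(\mV)$, which is exactly what you spell out. One cosmetic slip: in the forward direction the map to invoke is the canonical $\gP^\C$-equivariant surjection $\cJ^r\mV\ra\xJ^r\mV$ (precomposition with which turns $\phi$ into an ordinary-jet homomorphism, hence an operator of order at most $r$), not $\xJ^{2r}\mV\ra\cJ^r\mV$, which is the projection needed for the converse direction.
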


\section{$k$-Dirac complex and the resolution of the $k$-Dirac operator}\label{Section k-Dirac complex}
In the current Section we will apply the theory that we built in  Section 2. In Section \ref{section YD} we will  set some notation. We will show (see Theorem \ref{thm k-Dirac complex})  that there is a complex (the $k$-Dirac complex from Introduction) of linear and $\G$-invariant differential operators on $\G/\gP$  and that this complex is formally exact (see Proposition \ref{thm formal exactness}).  This uses the results already established in \cite{S} and the discussion from Section \ref{section embedding}.  The $k$-Dirac complex induces a complex of linear and  invariant differential operators with polynomial coefficients on $\G_-$ and we will show in Section \ref{section exactness of k-Dirac complex with formal power series} that this complex is exact in the sense of formal power series at any given point (see Theorem \ref{thm exactnes with formal power series}). By Proposition \ref{thm descending}, the complex on $\G_-$ induces a complex of linear and constant coefficient differential operators on the set $U:=\G_{-2}\setminus\G_-$. We will prove in Section \ref{section resolution} that the first operator in the descended complex is the $k$-Dirac operator from Introduction and that the complex is locally exact (see Theorem \ref{the main theorem}).

\subsection{Young diagrams}\label{section YD}
We will use the following notation.
We put $$\N^{k,n}_{++}:=\{(a_1,\dots,a_k): a_i\in\Z,\ n\ge a_1\ge a_2\ge\dots\ge a_k\ge0\}$$ 
and  call $a\in\N^{k,n}_{++}$ a  \textit{partition} of the number $|a|:=a_1+\dots+a_k$.  
To the partition $a$ we associate a diagram that consists of left justified $a_i$ boxes in the $i$-th row and call it the \textit{Young diagram  associated to} $a$.  We denote by $S^k$ the subset of $\N^{k,n}_{++}$ consisting of those partitions whose Young diagram is symmetric with respect to the main diagonal. Then we define $q(a)$ and $d(a)$ as the number of boxes in the Young diagram that are "above" and "on" the main diagonal, respectively. We put $r(a):=d(a)+q(a)$ and $S^k_j:=\{a\in S^k:\ r(a)=j\}$.
If $a'=(a'_1,\dots,a'_k)\in\N^{k,n}_{++}$, then we  write $a< a'$ if $a\ne a'$  and  $a_i\le a_i'$ for each $i=1,\dots,k$. 

\medskip

\begin{ex}
\begin{enumerate}
 \item The  Young diagram associated to $a=(4,2,0)\in\N^{3,4}_{++}$ is $\Ycd$. Then $d(a)=2,\ q(a)=3$ and $a\not\in S^k$ as there are four boxes in the first row but only two boxes in the first column.
 \item The Young diagram associated to $a'=(3,2,1,0)\in\N^{4,6}_{++}$  is $\Ytdj$. Hence, $d(a)=q(a)=2$ and we see that $a'\in S^k$.
\end{enumerate}
\end{ex}

\begin{ex}
\begin{enumerate}
\item If $k=2$, then $$S^{2}=\{\cdot\ <\ \Yj\ <\ \Ydj\ <\ \Ydd\big\}$$ and
$$S^2_0:=\big\{\cdot\big\},\ S^2_1:=\big\{\Yj\big\},\ S^2_2:=\big\{\Ydj\big\},\ S^2_3:=\big\{\Ydd\big\}$$
where we write also the ordering on $S^2$ defined above.
\item   For $k=3$ we have
$$S^{3}=
\left\{
\begin{array}{ccccccccccc}
\cdot&<&\Yj&<&\Ydj&<&\Ydd&&&\\ 
&&&&\wedge&&\wedge&&&&\\
&&&&\Ytjj&<&\Ytdj&<&\Yttd&<&\Yttt\\
\end{array}
\right\}$$
and 
\begin{align*}
& S^3_0:=\big\{\cdot\big\},\ S^3_1:=\big\{\Yj\big\},\ S^3_2:=\big\{\Ydj\big\},\ S^3_3:=\big\{\Ydd,\ \Ytjj\big\},\\
 &S^3_4:=\big\{\Ytdj\big\},\ S^3_5:=\big\{\Yttd\big\},\ S^3_6:=\big\{\Yttt\big\}.
\end{align*}
\end{enumerate}
\end{ex}

See \cite{F} that there is a recursive pattern that relates $S^{k+1}$ to $S^{k}$.

\subsection{The k-Dirac complex}\label{section k-Dirac complex}

For $a=(a_1,\dots,a_k)\in S^k$ we denote by  $\mW_a$  an irreducible $\GL(k,\C)$-module with lowest  weight 
$$-\bigg(\frac{1-2n}{2}-a_k,\dots,\frac{1-2n}{2}-a_1\bigg).$$ 
Let $\Sp$ be the complex space of spinors as in Introduction. Recall that $\Sp=\Sp_+\oplus\Sp_-$ where $\Sp_+,\ \Sp_-$  are two non-isomorphic complex spinor modules  of $\Spin(2n,\C)$ as in \cite{S}.
Then\footnote{Notice that we denoted this module in \cite{S} by $\mV_{\mu_a}$. More precisely $\mV_{\mu_a}$ is a summand of $\mV_a$ given by choosing $\Sp_+$ or $\Sp_-$ in $\Sp$.} $\mV_a:=\mW_a\ \!\boxtimes\ \!\Sp$ is an irreducible complex $\G_0^\C$-module and thus  also an irreducible  complex representation of $\gP^\C$ and $\gP$. The grading element $E$ acts  on $\mV_a$ by multiplication by  $\frac{k(n-1)}{2}+|a|$. 
\medskip

We put
\begin{equation}
\mV_j:=\bigoplus_{a\in S^k_j}\mV_{a},\ \ \  V_a:=\G\times_{\gP}\mV_a \ \ \ \mathrm{and} \ \ \ V_j=\G\times_{\gP}\mV_j.
\end{equation}
For $a\in S^k_j$, we denote by $s_a$ the $a$-th component of  $s\in\Gamma(\mV_j)$ so that  $s=(s_a)_{a\in S^k_j}$. 

\begin{prop}\label{thm k-Dirac complex}
Assume that $n\ge k\ge2,\ j\ge0,\ a\in S^k_j,\ a'\in S^k_{j+1}$ and that $a<a'$. 
Put $r:=|a'|-|a|$. 
\begin{enumerate}[(a)]
 \item Then  $1\le r \le2$ and there is  an invariant and $\C$-linear differential operator 
\begin{equation}\label{single dif op in k-Dirac complex}
{D}^a_{a'}:\Gamma(V_a)\ra\Gamma(V_{a'})
\end{equation}
of the weighted order  $r$.
\item 
There is a complex of differential operators
\begin{equation}\label{k-Dirac complex}
\Gamma(V_0)\xrightarrow{{D}_0}\Gamma(V_1)\xrightarrow{{D}_1}\dots\xrightarrow{{D}_{j-1}}\Gamma(V_j)\xrightarrow{{D}_j}\Gamma(V_{j+1})\xrightarrow{{D}_{j+1}}\dots
\end{equation} 
where 
\begin{equation}\label{dif op in k-Dirac complex}
 ({D}_j s)_{a'}:=\!\!\!\!\!\sum_{a\in S^k_j,a<a'}\!\!\!\!\! {D}_{a'}^{a}s_{a}, \ a'\in S^k_{j+1},\ s\in\Gamma(V_j).
\end{equation}
\end{enumerate}
\end{prop}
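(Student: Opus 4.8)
The plan is to construct the operators ${D}^a_{a'}$ and verify the complex property by pulling everything back to the complex parabolic geometry of type $(\G^\C,\gP^\C)$, where the machinery of the Penrose transform from \cite{S} produces the $k$-Dirac complex. Concretely, for a pair $a<a'$ with $a\in S^k_j$, $a'\in S^k_{j+1}$, the module $\mV_a=\mW_a\boxtimes\Sp$ (resp. $\mV_{a'}$) is an irreducible $\gP^\C$-module with the grading element $E$ acting by $\tfrac{k(n-1)}{2}+|a|$ (resp. $\tfrac{k(n-1)}{2}+|a'|$). By \cite[Theorem ...]{S}, the Penrose transform of the relevant relative BGG complex yields a $\G^\C$-invariant differential operator $\Gamma(\G^\C\times_{\gP^\C}\mV_a)\to\Gamma(\G^\C\times_{\gP^\C}\mV_{a'})$ whose component between the $a$- and $a'$-summands is nonzero exactly when $a<a'$ and $a'$ covers $a$ in the sense that $r(a')=r(a)+1$. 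Lemma \ref{lemma real and holomorphic invariant operators} translates this into a $\gP^\C$-equivariant (hence $\gP$-equivariant) map $\xJ^r\mV_a\to\mV_{a'}$, and then Proposition \ref{thm dif op through jets} gives the desired $\G$-invariant operator ${D}^a_{a'}$ on $\G/\gP$ of weighted order $r=\lambda-\mu=|a'|-|a|$.

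\medskip

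First I would pin down the bound $1\le r\le 2$. This is a purely combinatorial statement about the poset $S^k$: one must check that whenever $a'$ covers $a$ (i.e. $a<a'$ and $r(a')=r(a)+1$, with no element strictly between them in $S^k$), the difference $|a'|-|a|$ of the numbers of boxes is either $1$ or $2$. Inspecting the Hasse diagrams in the two examples already makes the pattern transparent: moving one step up in $r$ adds either a single box on the main diagonal (giving $r=1$) or a symmetric pair of boxes off the diagonal (giving $r=2$), and the recursive relation between $S^{k+1}$ and $S^k$ cited from \cite{F} shows this persists for all $k$. I would state this as a short lemma (or fold it into Section \ref{section YD}) and check the two cases against the definition of $d(a),q(a),r(a)$.

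\medskip

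Next, for part (a), the existence of ${D}^a_{a'}$: having produced the $\gP^\C$-equivariant map $\xJ^r\mV_a\to\mV_{a'}$ from the holomorphic complex via Lemma \ref{lemma real and holomorphic invariant operators} together with the identification of $\xJ^r$ with a quotient of $\cO(\mV)$, one invokes Proposition \ref{thm dif op through jets}: since $E$ acts by scalars $\lambda=\tfrac{k(n-1)}{2}+|a|$ on $\mV_a$ and $\mu=\tfrac{k(n-1)}{2}+|a'|$ on $\mV_{a'}$, the difference $r=\lambda-\mu$... — here I must be careful with signs: the weighted order is $r=|a'|-|a|\ge1$, so the homomorphism lands on the bottom piece $\xgr^r\mV_a$ and does not factor through $\xJ^{r-1}\mV_a$, exactly the nondegeneracy guaranteed by the Penrose-transform construction. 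This yields the nonzero $\G$-invariant operator ${D}^a_{a'}$ of weighted order $r$. For part (b), I would assemble the ${D}^a_{a'}$ into ${D}_j$ by the formula \eqref{dif op in k-Dirac complex} and argue that ${D}_{j+1}\circ{D}_j=0$. The cleanest route is again to note that each ${D}_j$ is the real/$\G$-invariant operator induced by the holomorphic operator on $\G^\C/\gP^\C$ given by the corresponding stage of the direct-image (relative BGG) complex from \cite{S}; since the latter is a complex (composition of consecutive relative BGG differentials is zero, and the Penrose transform is a functor preserving composition), the composite $\gP^\C$-equivariant jet maps vanish, and by the injectivity/faithfulness of the passage from invariant operators to jet homomorphisms (Section \ref{section homogeneous space} and Lemma \ref{lemma real and holomorphic invariant operators}) the composite differential operators vanish as well.

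\medskip

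\textbf{The main obstacle} will be the bookkeeping needed to match the output of the Penrose transform in \cite{S} — which is phrased in terms of generalized Verma modules, weights, and the Bott–Borel–Weil theorem on a correspondence space — with the explicit combinatorial indexing by $S^k$ and the weighted-order normalization used here; in particular one must verify that the components of the direct-image differentials are nonzero precisely for the covering relations $a<a'$ in $S^k$ (and for no other pairs), and that $\Sp=\Sp_+\oplus\Sp_-$ splits the whole complex compatibly, so that the footnote's identification $\mV_a=\mW_a\boxtimes\Sp$ with its $\Sp_\pm$-summands $\mV_{\mu_a}$ is consistent throughout. Once that dictionary is in place, parts (a) and (b) follow formally from Proposition \ref{thm dif op through jets} and Lemma \ref{lemma real and holomorphic invariant operators} with no further analysis.
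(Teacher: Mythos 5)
Your proposal is correct and follows essentially the same route as the paper: existence of the individual operators $D^a_{a'}$ is obtained by transferring the holomorphic operators of the complex from \cite{S} via Lemma \ref{lemma real and holomorphic invariant operators} and then applying Proposition \ref{thm dif op through jets} (with the grading-element eigenvalues fixing the weighted order $r=|a'|-|a|$), the bound $1\le r\le2$ is the elementary combinatorial observation about symmetric Young diagrams, and the complex property is inherited from the holomorphic complex through the isomorphism (\ref{isom hol and analytic germs}). The paper's proof is just a terser version of this, citing \cite[6.1, 7.8 and Theorem 6.2]{S} for the facts you reconstruct.
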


\begin{proof}
(a) The bounds on $r$ follow easily from the definitions. If there is a nonzero, linear and $\G$-invariant differential operator (\ref{single dif op in k-Dirac complex}), then from the action of the grading element on $\mV_a$ and $\mV_{a'}$ given above and Proposition \ref{thm dif op through jets}, it  follows that it is a differential operator of the weighted order $r$. Hence, it is enough to show that there is a $\gP$-equivariant map $\xJ^\ell\mV_a\ra\mV_{a'}$ for some integer $\ell\ge0$. But this follows from  Lemmata \ref{lemma real and holomorphic invariant operators},  \cite[6.1 and 7.8]{S}.

(b) This easily follows from \cite[Theorem 6.2]{S} and the isomorphism (\ref{isom hol and analytic germs}).
\end{proof}

We call the complex of invariant differential operators from  Proposition \ref{thm k-Dirac complex} the $k$-\textit{Dirac complex}.
Let $a,a',r$ and $D_{a'}^a$ be as in Proposition \ref{thm k-Dirac complex}. Recall (\ref{prolongation of differential operator}) that the $s$-th prolongation of $D_{a'}^a$ is a linear map
\begin{equation}\label{linear map on weighted jets}
\xgr D_{a'}^a: \xgr^{r+s}\mV_{a'}\ra\xgr^s\mV_a.
\end{equation}
This map is homogeneous of degree $-r$. It will be convenient to shift the gradings  so that the map (\ref{linear map on weighted jets}) is homogeneous of degree $-1$. For this we  notice that $q(a')=q(a)+r-1$ and thus, if we put
\begin{equation}\label{shifted grading}
 \xgr^i\mV_a[\uparrow]:=\xgr^{i-q(a)}\mV_a
\end{equation}
the map (\ref{linear map on weighted jets}) becomes
 \begin{equation}\label{linear map on shifted weighted jets}
\xgr D^a_{a'}:\xgr^i\mV_a[\uparrow]\ra\xgr^{i-1}\mV_{a'}[\uparrow]
\end{equation}
where $i=r+s+q(a)$. If we put
\begin{equation}
 \xgr^i\mV_j[\uparrow]=\bigoplus_{a\in S^k_j}\xgr^i\mV_{a}[\uparrow],
\end{equation}
then the differential operator $D_j$ induces for each $i\ge0$ a linear map
\begin{equation}\label{tilde D_j as shifted linear map}
\xgr {D}_j:\xgr^i\mV_j[\uparrow]\ra\xgr^{i-1}\mV_{j+1}[\uparrow].
\end{equation}

\bigskip
Let $\mV$  be a $\gP$-module. Then by (\ref{inclusion of jets}),  we can view for  the vector space  $gr^r\mV$ as a linear subspace of $\xgr^r\mV$. We put for each $a\in S^k$  
\begin{equation}\label{shifted grading I}
gr^i\mV_a[\uparrow]:=gr^{i-q(a)}\mV_a
\end{equation}
and
\begin{equation}\label{shifted grading II}
gr^i\mV_j[\uparrow]=\bigoplus_{a\in S^k_j}gr^i\mV_{a}[\uparrow]
\end{equation}
so that $gr^i\mV_a[\uparrow]\subset \xgr^i\mV_a[\uparrow]$ and $gr^i\mV_j[\uparrow]\subset \xgr^i\mV_j[\uparrow]$. Then we have the following proposition.

\begin{prop}\label{thm formal exactness}
 The $k$-Dirac complex induces for each $i+j\ge0$ a long exact sequence
 \begin{equation}\label{exact complex of weighted jets}
 \xgr^{i+j}\mV_0[\uparrow]\xrightarrow{\xgr {D}_0}\xgr^{i+j-1}\mV_1[\uparrow]\ra\dots\ra\xgr^{i}\mV_j[\uparrow]\xrightarrow{\xgr {D}_j}\xgr^{i-1}\mV_{j+1}[\uparrow]\ra\dots
\end{equation}
This sequence contains a long exact subsequence
\begin{equation}\label{exact subcomplex of jets}
 gr^{i+j}\mV_0[\uparrow]\ra gr^{i+j-1}\mV_1[\uparrow]\ra\dots\ra gr^{i}\mV_j[\uparrow]\ra gr^{i-1}\mV_{j+1}[\uparrow]\ra\dots
\end{equation}
\end{prop}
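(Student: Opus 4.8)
The plan is to deduce both exact sequences from the formal exactness of the $k$-Dirac complex established in \cite[Theorem 7.14]{S}, which — after transporting it to $\G/\gP$ via the isomorphism (\ref{isom hol and analytic germs}) and Lemma \ref{lemma real and holomorphic invariant operators} — asserts exactly that the complex induced by the $k$-Dirac complex on the infinite weighted jets $\xJ^\infty\mV_j$ at $e\gP$ is exact in positive degrees. First I would show that this is equivalent to the exactness of (\ref{exact complex of weighted jets}) for every $i+j$. By Proposition \ref{thm dif op through jets} each component $D^a_{a'}$ lies in $\cU_r(\lag_-)$ with $r=|a'|-|a|$, hence by (\ref{invariant vector fields in coordinates}) and the grading (\ref{polynomial over affine set}) it acts on polynomials as a weight-homogeneous operator of weighted degree $-r$; together with the identity $q(a')=q(a)+r-1$ this is precisely what makes the shifted prolongation (\ref{linear map on shifted weighted jets}) homogeneous of degree $-1$. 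Since the operators $D^a_{a'}$ are homogeneous, the grading element $E$ decomposes the whole induced complex on $\xJ^\infty\mV_j\cong\prod_i\xgr^i\mV_j[\uparrow]$ as the direct product, over $d=i+j\ge0$, of the finite complexes (\ref{exact complex of weighted jets}). As cohomology commutes with products, each of the latter is exact in positive degrees if and only if the infinite jet complex is, which is \cite[Theorem 7.14]{S}. This proves the first assertion.

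For the subsequence (\ref{exact subcomplex of jets}) I would first check that it is indeed a subcomplex. Under the identification $\xgr^r\mV\cong\bigoplus_\ell S^\ell\lag_2\otimes S^{r-2\ell}\lag_1\otimes\mV$ the subspace $gr^r\mV$ is the $\ell=0$ summand $S^r\lag_1\otimes\mV$, i.e. the polynomials depending only on the coordinates $x_{\alpha i}$; by (\ref{invariant vector fields in coordinates}) the operator $L_X$ for $X\in\lag_{-1}$ acts on such polynomials as $\partial_{x_{\alpha i}}$ (the $\partial_{y}$-term being annihilated) while $L_X$ for $X\in\lag_{-2}$ annihilates them, so every element of $\cU(\lag_-)$ — in particular every prolongation $\xgr D^a_{a'}$, cf. (\ref{commutative diagram with polynomials and weighted jets}) — preserves the $gr$-parts (this is the computation from the proof of Proposition \ref{thm descending}, and by (\ref{commutative diagram with jets and weighted jets}) the restricted maps are the associated-graded maps of the constant coefficient operators $\underline D^a_{a'}$ on $U$). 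To prove exactness I would filter the complex (\ref{exact complex of weighted jets}) by the $\lag_2$-degree $\ell$: again by (\ref{invariant vector fields in coordinates}) every $L_X$, $X\in\lag_-$, is non-increasing in $\ell$, so this is a filtration by subcomplexes whose bottom term ($\ell=0$) is (\ref{exact subcomplex of jets}) and whose $\ell$-th graded quotient is $S^\ell\lag_2\otimes\big(\text{the complex (\ref{exact subcomplex of jets}) in total degree }d-2\ell\big)$, since the $\ell$-preserving part of the differential touches only the $S^\bullet\lag_1$-factor. One then argues by induction on $d$ using the spectral sequence of this filtered complex: its $E_1$-page is $\bigoplus_\ell S^\ell\lag_2\otimes H^\bullet\big(\text{(\ref{exact subcomplex of jets}) in degree }d-2\ell\big)$, and by the inductive hypothesis the summands with $\ell\ge1$ vanish in positive cohomological degree, so there the $E_1$-page is concentrated in $\ell=0$, where it equals the cohomology of (\ref{exact subcomplex of jets}) in degree $d$; comparing with the abutment, which vanishes in positive degrees by the first assertion, yields the claim.

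The genuinely delicate point — and the one I expect to cost the most work — is the collapse of this spectral sequence at the first cohomological degree: there the possible incoming differentials originate from the zeroth cohomology of the lower-degree complexes, which the induction does not control, so one must argue separately that they cannot interfere. This amounts to understanding the filtration induced on $H^0$ of (\ref{exact complex of weighted jets}) in degree $d$ — equivalently, to understanding the formal solution space of the first operator $D_0$ on $\G_-$ in weighted degree $d$ — and here I would either establish directly that this filtration splits compatibly with the $gr$-pieces of lower weighted degree, or extract the corresponding statement from the analysis of the relative BGG complex in \cite{S}. Everything else is bookkeeping of the two gradings — weighted degree and $\lag_2$-degree — and of the shift $[\uparrow]$, the only external input being \cite[Theorem 7.14]{S}.
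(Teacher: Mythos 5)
Your treatment of the first sequence (\ref{exact complex of weighted jets}) is essentially the paper's own argument: both import the formal exactness from \cite[Theorem 7.14]{S}, transport it to the real form via the discussion of Section \ref{section embedding}, and use the homogeneity of the operators (equivalently the action of the grading element together with $q(a')=q(a)+r-1$) to split the infinite weighted-jet complex into the finite homogeneous pieces indexed by $i+j$. That half is fine, as is your verification that the $gr$-parts form a subcomplex.

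The second assertion is where the proposal has a genuine gap. The paper does not deduce the exactness of (\ref{exact subcomplex of jets}) from that of the ambient sequence; it is a separate statement imported, together with the first one, from \cite[Theorem 7.14]{S}. Your attempted derivation via the filtration by $\lag_2$-degree cannot close as written, and the ``delicate point'' you flag is not a technical loose end but the entire content of the claim: exactness of a subcomplex is never a formal consequence of exactness of the ambient complex. Concretely, in your spectral sequence the groups $E_1^{\ell,0}$ for $\ell\ge1$ are $S^\ell\lag_2$ tensored with the degree-$(d-2\ell)$ formal solution spaces of $\underline D_0$, which are nonzero and not controlled by the induction, and the differentials $d_r\colon E_r^{r,0}\ra E_r^{0,1}$ land precisely in the group $H^1$ of (\ref{exact subcomplex of jets}) that you are trying to kill. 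Vanishing of the abutment only shows that this $H^1$ is exhausted by the images of those differentials; showing that the images vanish amounts to showing that a $y$-free cocycle, which by the exactness of (\ref{exact complex of weighted jets}) is $D_0f$ for some $f$ possibly involving the $y$-variables, is already $D_0$ of a $y$-free element --- i.e.\ exactly the comparison between the two complexes that is being asserted. Neither of your two proposed fixes is carried out, so the second claim is not proved; the intended argument is your fallback, namely that \cite[Theorem 7.14]{S} already states the exactness of the subsequence as well.
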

\begin{proof}
 This follows from \cite[Theorem 7.14]{S} and the discussion from Section \ref{section embedding}.
\end{proof}

\subsection{The exactness of the $k$-Dirac complex in the sense of formal power series}\label{section exactness of k-Dirac complex with formal power series}
By the discussion that is given at the end of Section \ref{section affine subset}, the $k$-Dirac complex induces  a complex
\begin{equation}\label{k-Dirac complex over affine set}
\cC^\infty(\G_-,\mV_0)\xrightarrow{D_0}\cC^\infty(\G_-,\mV_1)\ra\dots\ra\cC^\infty(\G_-,\mV_j)\xrightarrow{  {D}_j}\cC^\infty(\G_-,\mV_{j+1})\ra\dots
 \end{equation}
of linear and $\G_-$-invariant differential operators with polynomial coefficients. We will now show that this complex  is exact with formal power series at any point $x\in\G_-$. Since each operator in the complex is invariant, it is certainly enough to show this at the origin $e\in\G_{-2}$. Here we will exploit the fact that with respect to the shifted grading introduced above each operator $D_j$ is  homogeneous of degree -1.

\bigskip

Recall (\ref{polynomial over affine set}) that the space of polynomials $S(\lap_+)$ over the affine set $\G_-$ has a natural grading $S(\lap_+)=\bigoplus_{r\ge0}\xS^r\lap_+$. Thus,  $\xS^r\lap_+\otimes\mV$ is the vector space of   functions $\G_-\ra\mV$ whose components  are (with respect to a fixed basis of $\mV)$ polynomials from $\xS^r\lap_+$. On the other hand by (\ref{isom weighted jets}), we can view $\xS^r\lap_+\otimes\mV$  for any irreducible $\gP$-module as the vector space $\xgr^r\mV$ and we will do that without further comment. Hence, there is a canonical isomorphism $\xS^r\lap_+\otimes\mV\ra\xgr^r\mV$ which appears  in the commutative diagram (\ref{commutative diagram with polynomials and weighted jets}) as the left vertical arrow.
\medskip

Let $\tilde\Phi(\mV_j)$ be the vector space of formal power series centered at $e\in\G_{-2}$ with values in the vector space $\mV_j$. So  $\Psi\in\tilde\Phi(\mV_j)$ can be written in a unique way as a formal sum $\sum_{i\ge 0}\Psi_i$ where $\Psi_i\in\xgr^i\mV_j[\uparrow]$. We obviously have  $D_j\Psi=\sum_{i\ge 0}D_j\Psi_i$.

\begin{thm}\label{thm exactnes with formal power series}
The sequence 
\begin{equation}\label{exact complex with formal power series}
  \tilde\Phi(\mV_0)\xrightarrow{D_0}\tilde\Phi(\mV_1)\ra\dots\ra\tilde\Phi(\mV_j)\xrightarrow{ {D}_j}\tilde\Phi(\mV_{j+1})\ra\dots
 \end{equation}
is exact.
\end{thm}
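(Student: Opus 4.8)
The plan is to reduce the statement about exactness of the complex of formal power series to the exactness of the associated graded complex established in Proposition \ref{thm formal exactness}. The key observation is that, with respect to the shifted grading $[\uparrow]$, each operator $D_j$ in the complex \eqref{k-Dirac complex over affine set} is homogeneous of degree $-1$: this follows from \eqref{tilde D_j as shifted linear map}, which says that $D_j$ maps $\xgr^i\mV_j[\uparrow]$ to $\xgr^{i-1}\mV_{j+1}[\uparrow]$. Writing $\Psi=\sum_{i\ge0}\Psi_i$ with $\Psi_i\in\xgr^i\mV_j[\uparrow]$, we have $D_j\Psi=\sum_{i\ge0}\xgr D_j(\Psi_i)$ with $\xgr D_j(\Psi_i)\in\xgr^{i-1}\mV_{j+1}[\uparrow]$, so the complex of formal power series decomposes compatibly with the grading and matching homogeneous components controls everything.

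The main step is then a standard ``filtered/graded'' argument. First I would fix $j\ge1$ (the case $j=0$ is just injectivity of $D_0$, which I would treat separately by the same degree-by-degree reasoning) and take $\Phi\in\tilde\Phi(\mV_j)$ with $D_j\Phi=0$. Write $\Phi=\sum_{i\ge0}\Phi_i$. The equation $D_j\Phi=0$ is equivalent to the system $\xgr D_j(\Phi_i)=0$ for all $i\ge0$. By Proposition \ref{thm formal exactness}, the sequence \eqref{exact complex of weighted jets} is exact at the spot $\xgr^i\mV_j[\uparrow]$, so for each $i$ there exists $\Theta_i\in\xgr^{i+1}\mV_{j-1}[\uparrow]$ with $\xgr D_{j-1}(\Theta_i)=\Phi_i$. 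Setting $\Theta:=\sum_{i\ge0}\Theta_i$, which is a well-defined element of $\tilde\Phi(\mV_{j-1})$ since the homogeneous components live in distinct graded pieces, we get $D_{j-1}\Theta=\sum_{i\ge0}\xgr D_{j-1}(\Theta_i)=\sum_{i\ge0}\Phi_i=\Phi$. This proves exactness at $\tilde\Phi(\mV_j)$.

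One small bookkeeping point to verify is that for a fixed degree $i$ only finitely many graded pieces are nonzero and that the indices match up across the complex; this is immediate from the isomorphisms \eqref{isom weighted jets} identifying $\xgr^r\mV$ with $\xS^r\lap_+\otimes\mV$, which is finite-dimensional, together with the fact that the shift $q(a)$ in \eqref{shifted grading} is a fixed nonnegative integer for each $a$. There is no convergence issue whatsoever because we work entirely in the formal category: an infinite formal sum of elements of pairwise distinct graded components is automatically well-defined.

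I do not expect any serious obstacle here: all the analytic content has already been absorbed into Proposition \ref{thm formal exactness} (which itself rests on \cite[Theorem 7.14]{S}), and what remains is the purely algebraic passage from exactness of a graded complex to exactness of the corresponding complex of formal power series, valid because the operators are graded of degree $-1$. If anything deserves care, it is simply making sure the reduction is stated cleanly --- namely that $D_j\Phi=0$ in $\tilde\Phi(\mV_{j+1})$ is genuinely equivalent to the collection of equations $\xgr D_j(\Phi_i)=0$ in each $\xgr^{i-1}\mV_{j+1}[\uparrow]$, which is exactly the content of the homogeneity of $D_j$ recorded in \eqref{tilde D_j as shifted linear map}.
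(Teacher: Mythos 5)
Your proposal is correct and follows essentially the same route as the paper: decompose a formal power series into its homogeneous components with respect to the shifted grading, use that $D_j$ is homogeneous of degree $-1$ so that $D_j\Psi=0$ is equivalent to $\xgr D_j(\Psi_i)=0$ for all $i$, and invoke the graded exactness of Proposition~\ref{thm formal exactness} degree by degree. (The only quibble is your parenthetical suggestion that the case $j=0$ amounts to injectivity of $D_0$ --- the $k$-Dirac operator is certainly not injective on formal power series, and exactness is only claimed at the interior terms; this aside does not affect your main argument.)
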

\begin{proof}
Let $\Psi=\sum_{i\ge 0}\Psi_i$ be as above. Then $\Psi_i$ is a $\mV_j$-valued polynomial function on $\G_-$ which (as explained above) we  view as an element of $\xgr^i\mV_j[\uparrow]$. Now $D_j\Psi_i$ is a $\mV_{j+1}$-valued polynomial on $\G_-$ which by the commutativity of the diagram (\ref{commutative diagram with polynomials and weighted jets}) and  (\ref{tilde D_j as shifted linear map}) belongs to $\xgr^{i-1}\mV_{j+1}$.
Now we can easily complete the proof. We see that ${D}_j\Psi=0$ if, and only if $ {D}_j\Psi_i=0$ for each $i\ge0$.  Now recall Proposition \ref{thm formal exactness} that  the complex (\ref{exact complex of weighted jets}) is exact for each $i+j\ge0$. This immediately implies that also the complex (\ref{exact complex with formal power series}) is exact.
\end{proof}

Let us now  recall a formula for the first operator $D_0$.  We have  $\mV_0\cong\Sp$ and  $\mV_1\cong\C^k\otimes\Sp$ as vector spaces and we view  $\mV_1$ as the vector space of $k$-tuples of spinors as in (\ref{k-Dirac operator}). It is shown in \cite{TS}  that 
\begin{equation}\label{k-Dirac operator over affine set}
{D}_0 f=\sum_{\alpha=1}^{2n}(\varepsilon_\alpha.L_{e^1\otimes\varepsilon_\alpha}f,\ \dots,\ \varepsilon_\alpha.L_{e^k\otimes\varepsilon_\alpha}f)
\end{equation}
where  $f\in\cC^\infty(\G_-,\Sp)$ and  $\varepsilon_\alpha.\in\mathrm{End}(\Sp)$ is the usual action of  $\varepsilon_\alpha$ on $\Sp$. 

 \subsection{The resolution of the  $k$-Dirac operator}\label{section resolution}
By Proposition \ref{thm descending}, the complex (\ref{k-Dirac complex over affine set}) descends to a complex
\begin{equation}\label{descended complex}
\cC^\infty(U,\mV_0)\xrightarrow{ \underline D_0}\cC^\infty(U,\mV_1)\xrightarrow{ \underline D_1}\dots\ra\cC^\infty(U,\mV_j)\xrightarrow{ \underline D_j}\cC^\infty(U,\mV_{j+1})\ra\dots
\end{equation}
of linear and  constant coefficient differential operators on $M(2n,k,\R)\cong U=\G_{-2}\setminus\G_-$. From the proof of Proposition \ref{thm descending} and (\ref{k-Dirac operator over affine set}), it immediately follows that the first operator $D_0$  is the $k$-Dirac operator (\ref{k-Dirac operator}). It remains to show that the sequence is locally exact.

\medskip

We will use the following notation. We  denote by $x_0:=\G_{-2}e$ the origin of $U$ and by $\Phi(\mV_j)$  the vector space of  formal power series centered at the point $x_0$ with values in the vector space $\mV_j$. An element  $\Psi\in\Phi(\mV_j)$ can be written in a unique way as an infinite formal sum $\sum_{i\ge 0}\Psi_i$ where $\Psi_i\in gr^i\mV_j[\uparrow]$. Here we use the fact that for any vector space $\mV$ the  space $gr^r\mV$ is naturally isomorphic  to the vector space of functions $U\ra\mV$ whose components are homogeneous polynomials of degree $r$.

\begin{thm}\label{the main theorem}
Assume that  $n\ge k\ge2$. Then the complex (\ref{descended complex}) is a locally exact sequence of linear and constant coefficient differential operators which starts with the $k$-Dirac operator.
 \end{thm}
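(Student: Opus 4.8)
The plan is to bootstrap local exactness from two ingredients already in hand: formal exactness in the sense of formal power series (Theorem \ref{thm exactnes with formal power series}, or rather its descended analogue), and the constant-coefficient nature of the descended operators $\underline D_j$, which places us in the classical setting where formal exactness plus an estimate yields local exactness. First I would record the descended version of the homogeneous-level statement: applying $q^\ast$ to the exact subsequence (\ref{exact subcomplex of jets}) of Proposition \ref{thm formal exactness} and using the commutative diagram (\ref{commutative diagram with jets and weighted jets}), one obtains that for every $i$ the sequence of finite-dimensional spaces
\begin{equation*}
gr^{i+j}\mV_0[\uparrow]\xrightarrow{gr\,\underline D_0}gr^{i+j-1}\mV_1[\uparrow]\ra\dots\ra gr^i\mV_j[\uparrow]\xrightarrow{gr\,\underline D_j}gr^{i-1}\mV_{j+1}[\uparrow]\ra\dots
\end{equation*}
is exact. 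Since $\underline D_j$ is a homogeneous constant-coefficient operator of order $r$ (shifted to degree $-1$ in the $[\uparrow]$-grading), summing over $i$ exactly as in the proof of Theorem \ref{thm exactnes with formal power series} shows that the complex (\ref{descended complex}) is exact on the level of formal power series $\Phi(\mV_j)$ centered at $x_0$, and by translation-invariance (constant coefficients) at every point of $U$.

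The second step passes from formal to local solvability. Here I would invoke the classical theory of overdetermined systems of linear PDE with constant coefficients: a complex of such operators that is formally exact at every point and whose symbol sequence is exact (which is precisely the $gr$-level statement above, read off from the principal symbols) is locally exact — this is the Ehrenpreis–Palamodov / Spencer framework, and in the homogeneous polynomial-coefficient picture it is often phrased as: formal exactness of a "nice" complex descends to local exactness. Concretely, one shows that the complexes of constant-coefficient operators form a resolution by checking that the associated sequence of polynomial modules (the Fourier transforms of the symbols) is exact, which is dual to the exactness of (\ref{exact subcomplex of jets}) after the identification $\cU_r(\lag_-)^\ast\cong\xS^r\lap_+$ and the embedding $gr^r\mV\hookrightarrow\xgr^r\mV$ identifying $gr^r\mV$ with $S^r\lag_1\otimes\mV$. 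Given exactness of the polynomial-module complex, the Ehrenpreis fundamental principle (or Palamodov's theorem on the exactness of the analytic/smooth solution sheaves) gives that (\ref{descended complex}) is exact on the level of germs of smooth (indeed real-analytic, and hyperfunction) solutions at every point, i.e. locally exact. Finally, the identification of $\underline D_0$ with the $k$-Dirac operator (\ref{k-Dirac operator}) has already been noted from (\ref{k-Dirac operator over affine set}) and the proof of Proposition \ref{thm descending}, since $L_{e^i\otimes\varepsilon_\alpha}$ descends to $\partial_{x_{\alpha i}}$.

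The main obstacle I anticipate is the passage from formal to local exactness: one must either quote the right version of the Ehrenpreis–Palamodov theory for complexes (not just for a single operator) or supply a direct argument. The cleanest direct route, given how much structure is available, is probably to exploit the homogeneity: because every $\underline D_j$ is homogeneous of a fixed degree in the shifted grading and the formal complex splits degree by degree into the exact finite-dimensional complexes of the $gr$-level, one can solve $\underline D_j u = f$ with $\underline D_{j+1} f = 0$ term by term on homogeneous components and then prove convergence of the resulting series on a small ball. Controlling the convergence is where the real work lies; one needs quantitative bounds on the chosen right inverses $gr\,\underline D_j^{-1}$ on each graded piece (uniform in the degree $i$, up to at most geometric growth), which should follow from the $\G_0$-equivariance of these maps and a Schur-type/representation-theoretic estimate, together with Cauchy estimates to pass from formal power series to genuine analytic (hence smooth) solutions on a neighborhood of $x_0$ — and then translation-invariance spreads this to all of $U$.
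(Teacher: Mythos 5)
Your proposal is correct and follows essentially the same route as the paper: formal exactness at $x_0$ is obtained degree by degree from the exact finite-dimensional sequences (\ref{exact subcomplex of jets}) via the commutative diagram (\ref{commutative diagram with jets and weighted jets}), and the passage from formal to local exactness for a complex of constant-coefficient operators is delegated to the classical theory (the paper cites Theorem a.\ of Nacinovich \cite{Na}, which is precisely the Ehrenpreis--Palamodov-type statement you invoke). The convergence estimates you anticipate as the ``real work'' are therefore not needed; the cited theorem absorbs them.
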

\begin{proof}
By Theorem a. from \cite{Na}, it is enough to show that the complex (\ref{descended complex}) is exact with formal power series at the origin $x_0$, i.e. we need to show that the induced complex
\begin{equation}\label{exact complex with formal power series II}
\Phi(\mV_0)\xrightarrow{ \underline D_0} \Phi(\mV_1)\ra\dots\ra \Phi(\mV_j)\xrightarrow{\underline D_j} \Phi(\mV_{j+1})\ra\dots
\end{equation}
is exact.
Let $\Psi=\sum_{i\ge 0}\Psi_i$ be as above.  Arguing as in the proof of Theorem \ref{thm exactnes with formal power series} and using the commutativity of (\ref{commutative diagram with jets and weighted jets}), one concludes that $\underline D_j\Psi_i\in gr^{i-1}\mV_j[\uparrow]$. Since  $\underline D_j\Psi=\sum_{i\ge0}\underline D_j\Psi_i$, we see that $\underline D_j\Psi=0$ if, and only if $\underline D_j\Psi_i=0$ for each $i\ge0$. The exactness of (\ref{exact complex with formal power series II}) then follows from the fact that  the complex (\ref{exact subcomplex of jets}) is exact for each $i+j\ge0$.
\end{proof}

\def\bibname{Bibliography}

\end{document}